\theoremstyle{plain}
\numberwithin{equation}{section}
\newtheorem{thm}{Theorem}[section]
\newtheorem{prop}[thm]{Proposition}
\newtheorem{cor}[thm]{Corollary}
\newtheorem{lem}[thm]{Lemma}
\newtheorem{prob}[thm]{Problem}
\newtheorem{question}{Question}
\theoremstyle{definition}
\newtheorem{dfn}[thm]{Definition}
\newtheorem{rmk}[thm]{Remark}
\def\rank{\mathop{\mathrm{rank}}\nolimits}
\def\dim{\mathop{\mathrm{dim}}\nolimits}
\def\Hom{\mathop{\mathrm{Hom}}\nolimits}
\def\<{{\langle}}
\def\>{{\rangle}}
\def\+{\mathop{\oplus}\nolimits}
\def\Supp{\mathop{\mathrm{Supp}}\nolimits}
\def\1{\mathop{\mathrm{id}}\nolimits}
\def\Spec{\mathop{\mathrm{Spec}}\nolimits}
\newcommand{\gl}[2]{{\mathsf{gl}\left({#1},  {#2}\right)}}
\newcommand{\HNF}[3]{{
\xymatrix{
0	\ar[r]	&	{#2}_1	\ar[r]\ar[d]	&	{#2}_2	\ar[r]\ar[d]	&	\cdots\ar[r]	&	{#2}_{n-1}	\ar[r]\ar[d]	&	{#2}_n={#1}\ar[d]	\\
			&	{#3}_1\ar@{-->}[ul]	&	{#3}_2\ar@{-->}[ul] &					&	{#3}_{n-1}\ar@{-->}[ul]		&	{#3}_n\ar@{-->}[ul]	
}
}}
\newcommand{\ann}[1]{{\mathrm{ann}({#1})}}
\newcommand{\Stab}[1]{{\mathsf{Stab}\,{#1}}}
\newcommand{\Dom}[1]{{\mathrm{Dom}({#1}^{-1})}}
\newcommand{\mf}[1]{{\mathfrak{#1}}}
\newcommand{\bb}[1]{{\mathbb{#1}}}
\newcommand{\mca}[1]{{\mathcal{#1}}}
\newcommand{\mr}[1]{{\mathrm{#1}}}
\newcommand{\ms}[1]{{\mathscr{#1}}}
\newcommand{\mb}[1]{{\mathbf{#1}}}
\title{Stability conditions on affine Noetherian schemes}
\author{Kotaro Kawatani}
\email{kawatanikotaro@gmail.com}
\keywords{Affine schemes, Stability conditions}
\date{\today}
\subjclass[2020]{14R10, 13E10}
\begin{document}
\maketitle	

\begin{abstract}
We show that the existence of  locally finite stability conditions on the bounded derived category $\mathbf{D}^{b}(X)$ of coherent sheaves on an affine Noetherian scheme $X$ is equivalent to $\dim X=0$. 
We also study the space of stability conditions on the category of morphisms $\mathbf M_{X}$ in the derived category of the scheme $X$. 
Similarly to the case of $\mb D^{b}(X)$, the existence of $\Stab{\mb M_{X}}$ is equivalent to $\dim X=0$. 
Finally we show that the spaces of stability conditions on $\mathbf{D}^{b}(X)$ and on $\mathbf{M}_{X}$ are homotopy equivalent. 
\end{abstract}

\section{Introduction}

\subsection{Stability conditions on affine schemes}
Let $\mb D^{b}(X)$ be the bounded derived category of coherent sheaves on an algebraic variety $X$. 
The space $\Stab{\mb D^{b}(X)}$ of stability conditions on $\mb D^{b}(X)$, introduced by Bridgeland \cite{MR2373143}, 
is an effective mathematical object for the study of algebraic geometry. 
For instance $\Stab{\mb D^{b}(X)}$ has many applications not only to the derived category $\mb D^{b}(X)$ (cf. \cite{MR2376815} and \cite{MR3592689}) but also to moduli spaces of sheaves on the variety $X$ 
(cf. \cite{MR3010070}, \cite{MR3279532} and \cite{MR3194493}). 

Now the non-emptiness of $\Stab{\mb D^{b}(X)}$ is non-trivial and fundamental. 
If $X$ is smooth and projective with $\dim X \leq 2$, then $\Stab{\mb D^{b}(X)}$ is non-empty by \cite{MR2998828} and \cite{MR2373143}. 
If $X$ is smooth and projective with $\dim X=3$, 
the generalized Bogomolov-Gieseker inequality introduced by Bayer-Macr\`{i}-Toda \cite{MR3121850} gives a sufficient condition for the non-emptiness.  
On the other hand, there seems no study of a necessary condition for the non-emptiness. 
In this note we show that the non-emptiness of stability conditions on an affine Noetherian scheme leads a property for dimension of the scheme. 

\begin{thm}\label{1st-thm}
Let $X$ be an affine Noetherian scheme of a Noetherian ring $R$. 
Then $\Stab{\mb D^{b}(X)}$ is non-empty if and only if $\dim X=0$. 
Moreover, if $\dim X=0$, then $\Stab{\mb D^{b}(X)}$ is isomorphic to $\bb C^{n}$ where $n$ is the number of the points in $X$. 
\end{thm}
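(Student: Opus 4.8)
The plan is to prove the two directions separately, with the easy direction being the "if" part. Suppose $\dim X = 0$. Then $R$ is a finite product of Artinian local rings $R \cong \prod_{i=1}^{n} R_i$, so $\mathbf{D}^b(X) \simeq \prod_{i=1}^n \mathbf{D}^b(\mathrm{Spec}\, R_i)$, and $\Stab{}$ of a product is the product of the $\Stab{}$'s. For a single Artinian local ring $R_i$, the category $\mathbf{D}^b(\mathrm{Spec}\, R_i)$ has a unique simple object up to shift (the residue field, or rather we should be careful — it is the category of finite-length modules whose unique simple is $R_i/\mathfrak{m}_i$, but $R_i$ itself need not have finite length as... no, it does, since it is Artinian local). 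So $K_0 \cong \mathbb{Z}$ generated by the class of the residue field, and by the standard argument (as in Bridgeland's analysis of $\Stab{}$ for categories with finitely many simples, cf. the $A_n$-type computations) every stability condition is determined by the central charge of the residue field, giving $\Stab{\mathbf{D}^b(\mathrm{Spec}\, R_i)} \cong \mathbb{C}$. Taking the product over the $n$ points yields $\mathbb{C}^n$. The local-finiteness is automatic here because the heart has finite length with one simple.

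For the "only if" direction — the substantive part — I would argue by contradiction: assume $\dim X \geq 1$ and that a locally finite stability condition $\sigma = (Z, \mathcal{P})$ exists on $\mathbf{D}^b(X)$. The key geometric input is that on an affine scheme of positive dimension one can produce an infinite family of "incomparable" objects, for instance the structure sheaves $\mathcal{O}_{Z_t}$ of a one-parameter family of closed subschemes, or the skyscraper-type sheaves $R/\mathfrak{p}$ for primes $\mathfrak{p}$ of height one sitting inside a chain. The heart $\mathcal{A} = \mathcal{P}((0,1])$ must be a Noetherian and Artinian abelian category (local finiteness plus the support property–type constraints force the heart to have finite length), but on a positive-dimensional affine scheme $\mathbf{D}^b(X)$ admits no bounded $t$-structure whose heart is of finite length. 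Concretely: a finite-length heart has only finitely many simple objects in each "block", forcing $K_0(\mathbf{D}^b(X))$ to be finitely generated and, more importantly, forcing every object to have a finite composition series — but $R$ itself (or a suitable quotient $R/\mathfrak{p}$ of positive Krull dimension) cannot have finite length over $R$, and one derives a contradiction from the existence of infinite descending/ascending chains of subobjects, or from the fact that HN filtrations would have to terminate.

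The main obstacle, and where care is needed, is making precise the claim that "local finiteness $\Rightarrow$ the heart has finite length," and then that "finite-length heart is impossible when $\dim X \geq 1$." For the first implication I expect to use that in a locally finite stability condition each quasi-abelian subcategory $\mathcal{P}((\phi-1,\phi])$ — in particular the heart — is of finite length as an abelian category; this is essentially Bridgeland's definition of local finiteness combined with the fact that the heart of a stability condition is the heart of a $t$-structure. For the second, the cleanest route is: a finite-length abelian category has a Grothendieck group that is free on the simples, and every object is a finite iterated extension of simples; pulling this back through the (unbounded in homological amplitude, since $X$ is affine of positive dimension and hence has infinite global dimension) derived category, one finds an object, e.g. a module $M$ with $\dim \mathrm{Supp}\, M \geq 1$, admitting an infinite filtration — contradicting finite length. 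I would likely phrase the contradiction via the existence of infinitely many non-isomorphic simple objects: the simples of the heart, suitably interpreted, would have to exhaust the (infinitely many) closed points or height-one primes of $\mathrm{Spec}\, R$, which is incompatible with finite length of $\mathcal{O}_X$ or of $R/\mathfrak{p}$ as objects of $\mathcal{A}$. Getting the bookkeeping of supports and filtrations exactly right is the delicate step.
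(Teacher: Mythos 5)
Your ``if'' direction is essentially the paper's argument (decompose $R$ into local Artinian factors, reduce to one point, identify $\Stab{\mb D^{b}(\Spec R_{i})}\cong\bb C$), though the step you call ``standard'' --- that \emph{every} stability condition on $\mb D^{b}(\Spec R_{i})$ has the residue field as its unique stable object up to shift --- is exactly what the paper has to prove (Proposition \ref{prop:zerolocal} via Lemma \ref{lem:negative}): one must show a stable object has cohomology concentrated in a single degree using $\Hom(A,A[-m])=0$, and then that it is indecomposable, hence $R/\mf m$ itself. That part is fixable. The genuine gap is in your ``only if'' direction, and it occurs at the step you yourself flag as delicate: the implication ``local finiteness $\Rightarrow$ the heart $\mca P((0,1])$ has finite length'' is false. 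Bridgeland's local finiteness only requires the \emph{thin} quasi-abelian slices $\mca P((\phi-\epsilon,\phi+\epsilon))$ to have finite length; it does not control the full heart. Slope stability on an elliptic curve is locally finite while $\mathrm{Coh}(E)$ is very far from finite length. What local finiteness does give you is that every $\mca P(\phi)$ has finite length, hence every object --- in particular $\mca O_{X}$ --- is a \emph{finite} iterated extension of $\sigma$-stable objects via HN plus Jordan--H\"older; but with only that, your plan has no way to produce a contradiction, because you have no control over what the stable objects look like. Your fallback, ``$\mb D^{b}(X)$ admits no bounded $t$-structure with finite-length heart when $\dim X\geq 1$,'' is a strictly stronger and much harder statement than the theorem, and you give no argument for it.

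The missing idea is the central one in the paper: for a $\sigma$-stable object $A$, every nonzero endomorphism is an isomorphism, and the ring map $\mu\colon R=\Gamma(X,\mca O_{X})\to\Hom(A,A)$ lands multiplication by each $r\in R$ in $\{0\}\cup\Aut{A}$. This forces $\ann{A}$ to be a prime ideal, and a localization argument (Lemmas \ref{lem:isom}--\ref{lem:zero}) shows it must be maximal, so $\dim\Supp A=0$ for \emph{every} stable object. Combined with the finite stable filtration of $\mca O_{X}$ one gets $X=\bigcup_{i}\Supp A_{i}$, a finite union of points, contradicting $\dim X>0$. Without exploiting the $R$-module structure on endomorphism spaces of stable objects --- the ``many global functions'' on an affine scheme --- your argument cannot close.
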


Based on the theorem above, it might be interesting to study what kind of categorial properties of a triangulated category $\mb D$ guarantee the existence of (locally finite) stability conditions on $\mb D$. 
Roughly, our proof is based on the existence of ``many'' global functions on the affine scheme $X=\Spec R$. 
Though we do not understand such an existence in terms of triangulated categories, 
the following question represents one of the directions. 

\begin{question}
Let $\mb D$ be a $R$-linear triangulated category over a Noetherian ring $R$. 
Suppose $\Stab{\mb D} \neq \emptyset$. 
Does $\dim R=0$ holds?
\end{question}
It might be interesting to study a relation between the existence of stability conditions and $R$-linear structures. 
To answer the question, a new idea could be needed.

%

\subsection{Stability conditions on morphisms}

We further study the space of stability conditions on morphisms in the bounded derived category of an affine Noetherian scheme, which is our second interest. 
Note that the category of morphisms in a triangulated category is not triangulated in general. 
Recall that the derived category $\mb D^{b}(X)$ of a Noetherian scheme $X$ is obtained by the homotopy category $\mr{h}(\ms D^{b}_{\mr{coh}}(X))$ of a stable infinity category $\ms D^{b}_{\mr{coh}}(X)$ of quasi-coherent sheaves with bounded coherent cohomologies. 
Then the homotopy category $\mr{h}(\ms D^{b}_{\mr{coh}}(X)^{\Delta^{1}})$ of the infinity category $\ms D^{b}_{\mr{coh}}(X)^{\Delta^{1}}$ of morphisms in the infinity category $\ms D^{b}_{\mr{coh}}(X)$ is triangulated, and hence is a reasonable candidate of the triangulated category of morphisms in $\mb D^{b}(X)$.

From now on let us denote by $\mb M_{X}$ the category $\mr{h}(\ms D^{b}_{\mr{coh}}(X)^{\Delta^{1}})$ of morphisms. 
We note that $\mb M_{X}$ is equivalent to the bounded derived category of representations of the $A_{2}$ quiver when $X$ is the affine scheme of a field by the author \cite[Corollary 6.2]{morphismstability}.

Basically we are interested in a relation between the spaces of stability conditions on $\mb D^{b}(X)$ and on $\mb M_{X}$. 
In our previous paper \cite{morphismstability}, we showed that there exist natural two holomorphic maps $d_{0}^{*}, d_{1}^{*} \colon \Stab{\mb D^{b}(X)} \rightrightarrows \Stab{\mb M_{X}}$ and that these maps are closed embeddings whose images do not intersect each other. 
Moreover $\sigma \in \Stab{\mb D^{b}(X)}$ is full if and only if $d_{i}^{*}\sigma $ is full where $i \in \{0,1\}$. 
The following naive problem suggested in \cite{morphismstability} is based on an expectation of the contractibleness of the space of stability conditions: 
\begin{prob}[{\cite[Problem 1.1]{morphismstability}}]\label{problem1}
Is $\Stab{\mb M_{X}}$ homotopy equivalent to $\Stab{\mb D^{b}(X)}$?
\end{prob}

One of the natural expectations is that these spaces of stability conditions are both contractible unless they are empty, in particular they are homotopy equivalent to each other. 
However it seems difficult to prove the homotopy equivalence in general. 
Moreover, to the best of our knowledge, there are no examples of triangulated categories whose space of stability conditions are non-contractible. 
So if the answer to Problem \ref{problem1} is negative, we might find an interesting example which is non-contractible space. 

The second aim is to give an  answer to Problem \ref{problem1} when $X$ is an affine Noetherian scheme. 
More precisely we show the following: 
\begin{thm}[=Corollary \ref{cor:equivalent}]\label{thm2}
Let $X$ be an affine Noetherian scheme. 
$\Stab{\mb M_{X}}$ is homotopy equivalent to $\Stab{\mb D^{b}(X)}$. 
\end{thm}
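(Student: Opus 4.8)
\emph{Overall strategy and reduction.} The plan is to deduce the statement formally from Theorem~\ref{1st-thm} together with the morphism-category counterpart of that theorem (established in the preceding section): $\Stab{\mb M_{X}}$ is non-empty precisely when $\dim X=0$, and is then contractible. Indeed, if $\dim X\geq 1$ then $\Stab{\mb D^{b}(X)}=\emptyset$ by Theorem~\ref{1st-thm} and $\Stab{\mb M_{X}}=\emptyset$ by its analogue for $\mb M_{X}$ (proved by running the argument of Theorem~\ref{1st-thm}, based on the abundance of global functions on $X$, inside the $R$-linear category $\mb M_{X}$), and the unique map $\emptyset\to\emptyset$ is a homotopy equivalence. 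So assume $\dim X=0$; then $R$ is Artinian, hence $R\cong\prod_{i=1}^{n}R_{i}$ with $(R_{i},\mf m_{i})$ local Artinian. Writing $X_{i}=\Spec R_{i}$, this gives orthogonal direct sum decompositions $\mb D^{b}(X)\cong\bigoplus_{i}\mb D^{b}(X_{i})$ and, since $\mr{Fun}(\Delta^{1},-)$ respects the product decomposition of $\ms D^{b}_{\mr{coh}}(X)$, also $\mb M_{X}\cong\bigoplus_{i}\mb M_{X_{i}}$. A locally finite stability condition on an orthogonal direct sum of triangulated categories is the same datum as a tuple of such on the summands — the central charge splits along $K_{0}$, and direct summands of semistable objects are semistable of the same phase, so the slicing decomposes accordingly — so $\Stab{\mb D^{b}(X)}\cong\prod_{i}\Stab{\mb D^{b}(X_{i})}$ and $\Stab{\mb M_{X}}\cong\prod_{i}\Stab{\mb M_{X_{i}}}$. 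It therefore suffices to treat $n=1$: assume $R$ is local Artinian, so that $\Stab{\mb D^{b}(X)}\cong\bb C$ by Theorem~\ref{1st-thm}. Since a non-empty contractible space is homotopy equivalent to $\bb C$, it remains only to prove that $\Stab{\mb M_{X}}$ is non-empty and contractible.

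\emph{The local Artinian case.} Non-emptiness is immediate from the closed embedding $d_{0}^{*}\colon\Stab{\mb D^{b}(X)}\to\Stab{\mb M_{X}}$ of \cite{morphismstability}. For contractibility we would describe $\Stab{\mb M_{X}}$ explicitly. The standard $t$-structure gives $\mb M_{X}$ a bounded heart equivalent to the category $\mr{Mor}(\mod{R})$ of morphisms between finitely generated $R$-modules; since $R$ is local Artinian with residue field $k$, this heart has finite length with exactly two simple objects, $S_{a}=[k\to 0]$ and $S_{b}=[0\to k]$, and $K_{0}(\mb M_{X})\cong\bb Z^{2}$. The embeddings $d_{0}^{*},d_{1}^{*}$ reflect the semiorthogonal decomposition $\mb M_{X}=\langle\mb D^{b}(X),\mb D^{b}(X)\rangle$ into two copies of $\mb D^{b}(X)$, and govern the neighbourhood structure of $\Stab{\mb M_{X}}$. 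The remaining steps would be: (i) to prove that the hearts of locally finite stability conditions on $\mb M_{X}$ are exactly those obtained from $\mr{Mor}(\mod{R})$ by simple tilts and shifts, so that $\Stab{\mb M_{X}}$ is covered by the contractible regions attached to these hearts; and (ii) to work out the exchange graph of these hearts and the gluing of their central-charge regions along walls, and conclude that $\Stab{\mb M_{X}}$ is homeomorphic to $\bb C^{2}$ (recovering, for $R=k$, the classical computation of $\Stab{\mb D^{b}(\mod{kA_{2}})}$), or equivalently that it deformation retracts onto the line $d_{0}^{*}\Stab{\mb D^{b}(X)}$. Either conclusion yields the contractibility of $\Stab{\mb M_{X}}$, and hence, with the reduction above, the theorem.

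\emph{The main obstacle.} Steps (i)--(ii) are routine when $R=k$, where $\mb M_{X}\simeq\mb D^{b}(\mod{kA_{2}})$ has a hereditary standard heart and a long-understood space of stability conditions; the difficulty is a genuine Artinian local ring. Then $\mr{Mor}(\mod{R})$ is no longer hereditary: $\Ext^{1}_{R}(k,k)\neq 0$ forces self-extensions of $S_{a}$ and $S_{b}$, and indeed $\Ext^{1}(S_{i},S_{j})\neq 0$ for every $i,j\in\{a,b\}$, so the $A_{2}$-quiver combinatorics cannot be transported verbatim. One has to verify that these self-extensions create no bounded hearts outside the tilting orbit of $\mr{Mor}(\mod{R})$ and do not disturb the cell structure — that is, that the wall-and-chamber decomposition of $\Stab{\mb M_{X}}$ is still that of a single cell. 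A smaller but necessary point, used in the reduction, is to make precise that locally finite stability conditions on an orthogonal direct sum decompose as a product.
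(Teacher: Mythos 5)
Your reduction is sound and matches the paper: the positive-dimensional case is handled by emptiness of both spaces, and the zero-dimensional case splits into local Artinian factors via the orthogonal decomposition, so everything comes down to showing $\Stab{\mb M_{X}}$ is contractible for $X=\Spec R$ with $R$ local Artinian. But that is exactly where your argument stops being a proof: steps (i) and (ii) --- classifying all bounded hearts supporting locally finite stability conditions on $\mb M_{R}$ and assembling the wall-and-chamber structure --- are not carried out, and you yourself flag them as ``the main obstacle.'' As written, the proposal establishes nothing beyond non-emptiness in the local case, so there is a genuine gap precisely at the statement that needs proving. (A side remark: your claim that $\Ext^{1}(S_{i},S_{j})\neq 0$ for \emph{every} pair is false --- $\Hom_{\mb M_{R}}(j_{*}(\mb k), j_{!}(\mb k)[p])=0$ for all $p$ by the adjunction $j_{*}\dashv d_{0}$, so one of the off-diagonal $\Ext^{1}$'s vanishes; only the two self-extensions and one off-diagonal extension are nonzero. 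This matters for the combinatorics you propose to run.)

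The paper avoids the heart-classification problem entirely and takes a different route through the local case, which is worth comparing with your plan. Instead of classifying hearts, it classifies \emph{stable objects}: using the isomorphic property (Ism), support arguments, and a spectral-sequence computation combined with the vanishing $\Hom_{\mb M_{R}}(i_{*}f,i_{*}g[p])=0$ for all $p$ whenever it vanishes for $p\in\{0,1\}$, it shows (Proposition \ref{prop:classification}) that every stable object of every locally finite stability condition on $\mb M_{R}$ is, up to shift, one of $s(\mb k)$, $j_{!}(\mb k)$, $j_{*}(\mb k)$ --- i.e.\ lies in the image of the faithful functor $i_{*}\colon \mb M_{0}\to\mb M_{R}$ induced by $\Spec\mb k\to\Spec R$. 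The inducing construction of Macr\`i--Mehrotra--Stellari then gives a map $i_{*}{}^{-1}\colon\Stab{\mb M_{R}}\to\Stab{\mb M_{0}}$ with full domain, and gluing along the three semiorthogonal decompositions of $\mb M_{X}$ shows it is a biholomorphism. This reduces the Artinian local case to the residue-field case $\mb M_{0}\simeq\mb D^{b}(\mod{\mb k A_{2}})$, where $\Stab{\mb M_{0}}\cong\bb C^{2}$ is known (Qiu). In other words, the paper's answer to your ``main obstacle'' is that the nilpotent thickening changes the stability manifold not at all, and this is proved at the level of stable objects rather than hearts. To complete your proposal you would need either to supply this argument or to carry out the tilting/exchange-graph analysis for a non-hereditary heart with simple self-extensions, which is substantially harder and is not done anywhere in the literature you could cite.
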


The proof of Theorem \ref{thm2} might be interesting as follows. 
We first prove an analogous statement to Corollary \ref{cor:charcterization}: 
\begin{thm}[=Corollary \ref{cor:matome}, Theorem \ref{thm:main2}]\label{thm1}
Let $X$ be an affine Noetherian scheme.
Then $\Stab{\mb M_{X}}$ is non-empty if and only if $\dim X=0$. 
Moreover, if $\dim X=0$, then $\Stab{\mb M_{X}}$ is isomorphic to $\bb C^{2n} $ where $n$ is the number of points in $X$. 
\end{thm}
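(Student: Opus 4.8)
The plan is to prove the two assertions separately, in each case reducing to Theorem~\ref{1st-thm} for $\mb D^{b}(X)$ through the semiorthogonal structure of the arrow category. The structural input I will use throughout is the semiorthogonal decomposition $\mb M_{X}=\langle \mb D^{b}(X),\mb D^{b}(X)\rangle$ afforded by the source and cofiber functors on $\ms D^{b}_{\mr{coh}}(X)^{\Delta^{1}}$, whose two fully faithful inclusions are the functors inducing $d_{0}^{*}$ and $d_{1}^{*}$. This yields $K_{0}(\mb M_{X})\cong K_{0}(\mb D^{b}(X))^{\oplus 2}$ and identifies the standard heart of $\mb M_{X}$ with the abelian category $\mca A_{X}$ of morphisms of coherent sheaves, that is, representations of the $A_{2}$ quiver in $\mr{Coh}(X)$, glued from $\mr{Coh}(X)$ on each factor.

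For the implication $\dim X=0\Rightarrow \Stab{\mb M_{X}}\cong\bb C^{2n}$, I would first use that a zero-dimensional Noetherian ring is Artinian and splits as $R\cong\prod_{i=1}^{n}R_{i}$ into local Artinian factors. Orthogonality of the resulting product decomposition $\mb M_{X}\cong\prod_{i}\mb M_{\Spec R_{i}}$ forces $\Stab{\mb M_{X}}\cong\prod_{i}\Stab{\mb M_{\Spec R_{i}}}$, so it suffices to treat a single local Artinian $R$ and prove $\Stab{\mb M_{\Spec R}}\cong\bb C^{2}$. Here $\mr{Coh}(\Spec R)=\mod{R}$ has a unique simple object $k=R/\mf m$, so $\mca A_{\Spec R}$ is a finite-length abelian category with exactly two simple objects $S_{1}=[k\to 0]$ and $S_{2}=[0\to k]$. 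I would then show that every locally finite stability condition has a finite-length heart with exactly two simple objects, that it is locally cut out by the two central charges of those simples, and that the resulting charts glue to the whole of $\bb C^{2}$. The model is the identification $\Stab{\mb M_{\Spec k}}\cong\Stab{\mb D^{b}(\mr{rep}\,A_{2})}\cong\bb C^{2}$ recalled from \cite{morphismstability}; the only extra work for a genuine local Artinian $R$ is that $\mod R$ is richer than $\mod k$, which is precisely the phenomenon already tamed in Theorem~\ref{1st-thm}, where $\Stab{\mb D^{b}(\Spec R)}\cong\bb C$ despite $\mod R\neq\mod k$. Multiplying over $i$ gives $\bb C^{2n}$.

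For the converse I would argue contrapositively that $\dim X\ge 1$ forces $\Stab{\mb M_{X}}=\emptyset$. A locally finite stability condition on $\mb M_{X}$ produces a bounded t-structure with finite-length heart $\mca B$; along the inclusion of one semiorthogonal factor $\mb D^{b}(X)\hookrightarrow\mb M_{X}$ I would produce a finite-length heart on $\mb D^{b}(X)$, contradicting Theorem~\ref{1st-thm}. Concretely, the obstruction can be exhibited by transport: when $\dim X\ge 1$ there is a prime $\mf p$ with $\dim R/\mf p\ge 1$ and a non-unit $t\notin\mf p$, and the short exact sequence $0\to R/\mf p\xrightarrow{\,t\,}R/\mf p\to M\to 0$ shows that the nonzero module $M=R/(\mf p+tR)$ has vanishing class in $K_{0}(\mb D^{b}(X))$; its image $[0\to M]\in\mb M_{X}$ is then a nonzero object of class $0$ in $K_{0}(\mb M_{X})$, which no finite-length heart equipped with a central charge can accommodate.

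The main obstacle is the explicit computation $\Stab{\mb M_{\Spec R}}\cong\bb C^{2}$ for local Artinian $R$: I must establish heart-rigidity, i.e. that $\mb M_{\Spec R}$ carries no exotic finite-length heart beyond those reachable from $\mca A_{\Spec R}$ by tilting, and then verify that the resulting chart structure assembles into all of $\bb C^{2}$ rather than a proper open subset or a nontrivial cover. A secondary difficulty is making the restriction of a finite-length heart along the semiorthogonal inclusion rigorous in the converse, since such restrictions are not automatic for arbitrary t-structures; I expect to handle this by using the compatibility of $\mca B$ with the gluing data, or, failing that, by sharpening the $K_{0}$-transport obstruction above into a genuine contradiction with local finiteness, exactly as in the proof of Theorem~\ref{1st-thm}.
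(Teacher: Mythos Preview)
Your reduction to the local Artinian case and the product decomposition is the same as the paper's, and your identification of ``heart rigidity'' as the crux of the computation $\Stab{\mb M_{\Spec R}}\cong\bb C^{2}$ is correct. However, both halves of your argument contain genuine gaps.

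For the converse, the claim that a locally finite stability condition has a finite-length heart is false: on a smooth projective curve the standard slope stability is locally finite, yet its heart $\mr{coh}(C)$ is not of finite length. More seriously, the $K_{0}$ obstruction does not do what you want. A nonzero object of class $0$ in $K_{0}$ is never an obstruction to the existence of stability conditions: already on $\Spec\mb k$ the object $\mb k\oplus\mb k[1]$ has class $0$, and even if your object $[0\to M]$ lies in a fixed heart, all you can conclude is that it does not lie in the (hypothetical) stability heart $\mca B$, not that $\mca B$ fails to exist. Its $\mca B$-cohomologies may perfectly well have classes cancelling in alternating sum. The paper's proof of emptiness for $\dim X>0$ is entirely different and does not pass through $K_{0}$: one observes that any $\sigma$-stable object $f\in\mb M_{X}$ has a local endomorphism ring, hence the multiplication $\mu_{r}\colon f\to f$ by each $r\in R$ is zero or an isomorphism (the condition (Ism)); applying $d_{0},d_{1}$ and Lemma~\ref{lem:zero} forces $\Supp d_{i}f$ to be zero-dimensional, and then the Harder--Narasimhan/Jordan--H\"older filtration of $s(\mca O_{X})$ exhibits $X$ as a finite union of zero-dimensional supports, a contradiction.

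For the forward direction, you correctly isolate the difficulty but offer no mechanism to rule out exotic hearts on $\mb M_{\Spec R}$ when $R$ is genuinely non-reduced. The paper does not attempt to classify hearts directly. Instead it first classifies $\sigma$-stable objects: using the (Ism) argument again one sees that each cohomology $H^{i}(f)$ lies in $\mca A_{0}$, and then a spectral sequence argument together with the vanishing Lemma~\ref{lem:higherext} forces $f$ to be concentrated in a single degree, hence equal (up to shift) to one of $s(\mb k),\,j_{!}(\mb k),\,j_{*}(\mb k)$. With the stable objects pinned down, the paper compares $\Stab{\mb M_{R}}$ to $\Stab{\mb M_{0}}$ via the inducing construction of Macr\`{\i}--Mehrotra--Stellari applied to the faithful functor $i_{*}\colon\mb M_{0}\to\mb M_{R}$: one checks $\Dom{i_{*}}=\Stab{\mb M_{R}}$, that $i_{*}{}^{-1}$ is bijective (surjectivity uses the Collins--Polishchuk gluing along the semiorthogonal decompositions of Lemma~\ref{lem:SOD}), and then invokes Qiu's computation $\Stab{\mb M_{0}}\cong\bb C^{2}$. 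Your chart-by-chart assembly could in principle be made to work, but it would require exactly this classification of stable objects as input, and that is where all the content lies.
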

Thus, if $\dim X> 0$ then there is nothing to study the homotopy types of $\Stab {\mb M_{X}}$ and of $\Stab {\mb D^{b}(X)}$. 
Now suppose $\dim X=0$. 
Roughly we show that the space of stability conditions is invariant under nilpotent thickening. 
To be precise, let $X_{0}$ be the set of closed points of $X$. 
Then the closed embedding $i \colon X_{0} \to X$ induces a faithful exact functor 
$i_{*} \colon \mb M_{X_{0}} \to \mb M_{X}$. 
In general an exact functor between triangulated categories does not induce a map between the spaces of stability conditions. 
However,  using the inducing construction  due to \cite{MR2524593}, we show that the functor $i_{*}$ implies a continuous map $i_{*}{}^{-1} \colon \Stab{\mb M_{X}} \to \Stab{\mb M_{X_{0}}}$ contravariantly and that the map gives an isomorphism. 
Using Qiu \cite{qiu-thesis} and Dimitrov-Katzarkov \cite{MR3984103}, we see that $\Stab{\mb M_{X_{0}}}$ is contractible. 
Then, combining Theorem \ref{1st-thm}, Theorem \ref{thm2} follows. 

So the essential part is the isomorphism $i_{*}{}^{-1} \colon \Stab{\mb M_{X}} \to \Stab{\mb M_{X_{0}}}$ induced by the faithful functor $i_{*}$. 
Due to non-functoriality of taking the space of stability conditions, our method might be rare and interesting.
\section{Preliminaries}

\subsection{Derived categories of coherent sheaves}
Let $X$ be a Noetherian scheme and $\mb D^{b}(X)$ be the bounded derived category of coherent sheaves on $X$. 
A global function $r \in H^{0}(X, \mca O_{X})$ gives an endomorphism 
$\mu_{r} \colon E \to E $ of $E \in \mb D^{b}(X)$ via multiplication by $r$. 
We refer to the morphism $\mu_{r}$ as the \textit{multiplication by $r \in H^{0}(X, \mca O_{X})$}. 
The multiplication $\mu_{r}$ is just the value of the morphism $\mu$ as algebras 
\begin{equation}\label{multi}
\mu \colon \Gamma (X, \mca O_{X}) \cong \Hom_{\mb D^{b}(X)}(\mca O_{X}, \mca O_{X}) \to \Hom_{\mb D^{b}(X)}(E, E). 
\end{equation}

The following condition for an object $E \in \mb D^{b}(X)$ is crucial for us: 
\begin{dfn}
An object $E$ in $\mb D^{b}(X)$ has the \textit{isomorphic property} if $E$ satisfies the following:
\begin{itemize}
\item[(Ism)] For any $r \in R$, the morphism $\mu_{r} \colon E \to E$ is an isomorphism if $\mu_{r}$ is non-zero.  
\end{itemize}

\end{dfn}

Recall that the \textit{support} $\Supp E$ of a complex $E \in \mb D^{b}(X)$ is the union of the support of the $i$-th cohomology of $E$: 
\[
\Supp E = \bigcup _{i \in \bb Z} \Supp  H^{i}(E). 
\]
Note that $\Supp E$ is closed since $E$ is a bounded complex.

\subsection{Inducing stability conditions}

Let $\mb D$ be a triangulated category. 
Following the original article \cite{MR2373143}, the set of locally finite stability conditions on $\mb D$ is denoted by $\Stab{\mb D}$. 
Recall that a stability condition consists of a pair $\sigma=(Z, \mca P)$ where $Z$ is a group homomorphism from the Grothendieck group of $\mb D$ to $\bb C$ and $\mca P=\{ \mca P(\phi) \}_{\phi \in \bb R}$ is the collection of full sub-abelian categories $\mca P(\phi)$ of $\mb D$. 
An object $A \in \mb D$ is said to be \textit{$\sigma$-semistable} if $A$ is in $\mca P(\phi)$ and $A$ is non-zero. 
Moreover the object $A$ is said to be \textit{$\sigma$-stable} if $A$ is simple in $\mca P(\phi)$.

%
%
%

An exact functor $F \colon \mb D \to \mb D'$ between triangulated categories does not induces a map between spaces of stability conditions in general. 
However, a ``good'' functor $F \colon \mb D \to \mb D'$ induces a map $F^{-1}$ from a subset of $\Stab{\mb D'}$ to $\Stab{\mb D}$ due to Macr\'{i}-Mehrotra-Stellari \cite{MR2524593}. 
Let us briefly recall the construction of $F^{-1}$. 

Let $F \colon \mb D \to \mb D'$ be an exact functor between triangulated categories. 
Assume that $F$ satisfies the following additional condition
\begin{enumerate}
\item[(Ind)] $\Hom_{\mb D'}(F(a) , F(b))=0$ implies $\Hom_{\mb D}(a,b)=0$ for any $a, b\in \mb D$. 
\end{enumerate}

Let $\sigma' =(Z', \mca P')\in \Stab {\mb D'}$. 
Define $F^{-1}\sigma'$ by the pair $(Z, \mca P)$ where 
\begin{equation}
Z = Z'\circ F, \ \mca P(\phi) = \{ x \in \mb D \mid F(x) \in \mca P'(\phi) \}. 
\end{equation}\label{eq:inducing}
By the definition of $F^{-1}\sigma'$, the pair $F^{-1} \sigma'$ is a stability condition on $\mb D$ if and only if $F^{-1} \sigma' $ has the Harder-Narasimhan property. 

\begin{lem}[{\cite[Lemma 2.9]{MR2524593}}]\label{lm:MMS}
Notation is the same as above. 
The map $F^{-1} \colon \Dom F \to \Stab {\mb D}$ is continuous. 
%
\end{lem}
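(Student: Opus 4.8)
The plan is to describe the topology on each space of stability conditions by Bridgeland's metric and then to check that $F^{-1}$ is $1$-Lipschitz, which gives continuity at once. Recall from \cite{MR2373143} that for $\sigma = (Z, \mca P) \in \Stab{\mb D}$ and a nonzero object $E \in \mb D$ one has the maximal and minimal phases $\phi^{+}_{\sigma}(E) \geq \cdots \geq \phi^{-}_{\sigma}(E)$ of the Harder--Narasimhan factors $A_1, \dots, A_n$ of $E$, together with the mass $m_{\sigma}(E) = \sum_{i} |Z(A_i)|$, and that the topology of $\Stab{\mb D}$ is the one induced by the generalized metric
\[
d(\sigma_1, \sigma_2) \;=\; \sup_{0 \neq E \in \mb D} \left\{ \, \bigl| \phi^{+}_{\sigma_1}(E) - \phi^{+}_{\sigma_2}(E) \bigr|,\; \bigl| \phi^{-}_{\sigma_1}(E) - \phi^{-}_{\sigma_2}(E) \bigr|,\; \Bigl| \log \tfrac{m_{\sigma_1}(E)}{m_{\sigma_2}(E)} \Bigr| \, \right\} \;\in\; [0, \infty].
\]
Since by definition $\Dom F$ consists of those $\sigma'$ for which $F^{-1}\sigma'$ already lies in $\Stab{\mb D}$, only the continuity of $\sigma' \mapsto F^{-1}\sigma'$ needs proof.

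The first thing I would establish is that $F$ transports Harder--Narasimhan data from $\mb D$ to $\mb D'$. Condition (Ind) forces $F(x) \neq 0$ whenever $x \neq 0$, for otherwise $\Hom_{\mb D'}(F(x), F(x)) = 0$ would give $\Hom_{\mb D}(x, x) = 0$, i.e. $x = 0$. Now fix $\sigma' = (Z', \mca P') \in \Dom F$, write $F^{-1}\sigma' = (Z, \mca P)$ with $Z = Z' \circ F$, and let $0 = x_0 \to x_1 \to \dots \to x_n = x$ be the HN filtration of a nonzero $x$ with respect to $F^{-1}\sigma'$, with factors $A_i \in \mca P(\phi_i)$ and $\phi_1 > \dots > \phi_n$. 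Applying the exact functor $F$ to the defining triangles produces a filtration $0 = F(x_0) \to \dots \to F(x_n) = F(x)$ with factors $F(A_i)$; by definition of $\mca P$ one has $F(A_i) \in \mca P'(\phi_i)$, and $F(A_i) \neq 0$ by (Ind) applied to $A_i$, so each $F(A_i)$ is $\sigma'$-semistable of phase $\phi_i$ with the $\phi_i$ still strictly decreasing. By uniqueness of Harder--Narasimhan filtrations this is the HN filtration of $F(x)$, whence
\[
\phi^{\pm}_{F^{-1}\sigma'}(x) = \phi^{\pm}_{\sigma'}(F(x)), \qquad m_{F^{-1}\sigma'}(x) = \sum_{i} \bigl| Z'(F(A_i)) \bigr| = m_{\sigma'}(F(x)).
\]

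From this the Lipschitz bound is immediate: given $\sigma', \tau' \in \Dom F$, each of the three quantities occurring in $d(F^{-1}\sigma', F^{-1}\tau')$ evaluated at a nonzero $x \in \mb D$ coincides with the same quantity in $d(\sigma', \tau')$ evaluated at the nonzero object $F(x) \in \mb D'$, so the supremum over $x \in \mb D$ is at most the supremum over all nonzero objects of $\mb D'$; that is, $d(F^{-1}\sigma', F^{-1}\tau') \leq d(\sigma', \tau')$. A $1$-Lipschitz map between (generalized) metric spaces is continuous, so $F^{-1} \colon \Dom F \to \Stab{\mb D}$ is continuous. I do not expect a genuine obstacle here: the only real input is condition (Ind), used to guarantee that $F$ annihilates no nonzero object and in particular no semistable HN factor, and the rest is exactness of $F$ together with uniqueness of Harder--Narasimhan filtrations.
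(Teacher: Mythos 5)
Your argument is correct: condition (Ind) guarantees $F$ kills no nonzero object, so $F$ carries HN filtrations for $F^{-1}\sigma'$ to HN filtrations for $\sigma'$, and the resulting $1$-Lipschitz bound for Bridgeland's generalized metric gives continuity. The paper offers no proof of its own here (it simply cites \cite[Lemma 2.9]{MR2524593}), and your argument is essentially the one given in that reference.
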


\begin{rmk}
Recall that the universal cover $\widetilde{\mr {GL}}_{2}^{+}(\bb R)$ of $\mr{GL}_{2}^{+}(\bb R)$ has the right action to the space of stability conditions. 
The map $F^{-1}$ is $\widetilde{\mr {GL}}_{2}^{+}(\bb R)$-equivariant by the definition of $F^{-1}$. 
\end{rmk}

\subsection{Semiorthogonal decompositions and stability conditions}
Collins--Polishchuck \cite{MR2721656} proposed a construction of stability conditions on a triangulated category $\mb D$ from a semiorthogonal decomposition. 
A key ingredient of the construction is a \textit{reasonable} stability condition on a triangulated category.   

\begin{dfn}[{\cite[pp. 568]{MR2721656}}]\label{dfn-reasonable}
A stability condition $\sigma =(\mca A, Z)$ on a triangulated category $\mb D$ is 
\textit{reasonable} if $\sigma$ satisfies
\[
0 < \inf	\{	|Z(E)| \in \bb R  \mid 	E\mbox{ is semistable in }\sigma  \}. 
\]
\end{dfn}

\begin{rmk}\label{rmk:reasonable}
A reasonable stability condition is locally finite by \cite[Lemma 1.1]{MR2721656}. 
Unfortunately we do not know whether the converse holds or not. 
For instance, 
if $\rank K_{0}(\mb D)=1$, then any stability condition on $\mb D$ is reasonable. 
\end{rmk}

Let $\mb D$ be a triangulated category. 
Recall that a pair $(\mb D_{1}, \mb D_{2})$ of full triangulated subcategories of $\mb D$ is said to be a \textit{semiorthogonal decomposition} of $\mb D$ if the pair satisfies 
\begin{itemize}
\item[(1)] $\Hom_{\mb D}(E_{2}, E_{1})=0$ for any $E_{i } \in \mb D_{i}$ ($i=1,2$), and 
\item[(2)] any object $E \in \mb D$ is decomposed into a pair of objects $E_{i}\in \mb D_{i}$ $(i=1,2)$ by the following distinguished triangle in $\mb D$:
\[
\xymatrix{
E_{2}	\ar[r]	&	E	\ar[r]	&	E_{1}	\ar[r]	&	E_{2}[1]. 
}
\]
\end{itemize}
The situation will be denoted by the symbol $\mb D=\< \mb D_{1}, \mb D_{2}\>$ or simply $\< \mb D_{1}, \mb D_{2}\>$. 
In addition to the first condition (1) above, if $\Hom_{\mb D}(E_{1}, E_{2})=0$ holds, the semiorthogonal decomposition is said to be \textit{orthogonal}.

\begin{prop}[{\cite{MR2721656}}]\label{CP2.2}
Let $\<\mb D_1, \mb D_2  \>$ be a semiorthogonal decomposition of a triangulated category $\mb D$. 
The left adjoint of the inclusion $\mb D_1 \to \mb D$ is denoted by $\tau_1$ and the right adjoint of the inclusion $\mb D_2 \to \mb D$ is denoted by $\tau_2$.
Let $\sigma_i = (Z_i, \mca P_i)$ be a reasonable stability condition on $\mb D_i$ for $i \in \{1,2\}$.  
Suppose that $\sigma_1$ and $\sigma_2$ satisfy the following conditions
 \begin{enumerate}
\item  $\Hom_{\mb D}^{\leq 0}\left (\mca P_1(0,1], \mca P _2(0,1]\right) =0$ \label{condition-a} and 
\item There is a real number $a\in (0,1)$ such that $\Hom_{\mb D}^{\leq 0} \left(\mca P_1(a,a+1], \mca P _2(a,a+1]\right) =0$. \label{condition-b}
\end{enumerate}

Then there exists a unique reasonable stability condition $\gl{\sigma_1} {\sigma_2}$ on $\mb D$ glued from $\sigma _1$ and $\sigma _2$ whose heart $\mca A$ of the $t$-structure of $\gl{\sigma _1} {\sigma _2}$ is given by 
\[
\mca A= \{ E \in \mb D \mid \tau_i(E) \in \mca P_i\left((0,1]\right)  \, (i=1,2)\}
\]
and whose central charge $Z$ is given by $Z(E) = Z_1(\tau_1(E)) + Z_2(\tau_2(E))$. 
\end{prop}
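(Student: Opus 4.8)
The statement is the gluing construction of Collins--Polishchuck; the plan is to build $\gl{\sigma_1}{\sigma_2}$ out of the data of the two pieces in three stages --- glue the bounded $t$-structures, equip the resulting heart with the prescribed central charge, and establish the Harder--Narasimhan property --- and then record reasonableness and uniqueness.

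Write $\mca A_i=\mca P_i((0,1])$ for the heart of $\sigma_i$, which is a bounded $t$-structure on $\mb D_i$. Since the inclusion $\mb D_1\hookrightarrow\mb D$ has the left adjoint $\tau_1$, the inclusion $\mb D_2\hookrightarrow\mb D$ has the right adjoint $\tau_2$, and $\Hom_{\mb D}(\mb D_2,\mb D_1)=0$ in all degrees, I would glue these $t$-structures in the manner of Beilinson--Bernstein--Deligne: set $\mb D^{\leq 0}=\{E\mid\tau_1E\in\mca A_1^{\leq 0}\text{ and }\tau_2E\in\mca A_2^{\leq 0}\}$ and $\mb D^{\geq 0}$ dually, where $\mca A_i^{\leq 0},\mca A_i^{\geq 0}$ denote the truncation classes on $\mb D_i$. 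The only axiom that is not formal is the orthogonality $\Hom_{\mb D}(\mb D^{\leq 0},\mb D^{\geq 1})=0$: unwinding the triangles $\tau_2E\to E\to\tau_1E$ reduces it, via $\Hom_{\mb D}(\mb D_2,\mb D_1)=0$ and the shift relations in the slicings, to hypothesis \eqref{condition-a}. Boundedness of the glued $t$-structure is inherited from the two pieces through the same triangle, the heart is readily identified with $\mca A=\{E\mid\tau_iE\in\mca A_i\ (i=1,2)\}$, and every $0\neq E\in\mca A$ fits into a short exact sequence $0\to\tau_2E\to E\to\tau_1E\to 0$ in $\mca A$. Because the decomposition splits $K_0(\mb D)=K_0(\mb D_1)\oplus K_0(\mb D_2)$ with $[E]=[\tau_1E]+[\tau_2E]$, the formula $Z(E)=Z_1(\tau_1E)+Z_2(\tau_2E)$ gives a well-defined group homomorphism; and for $0\neq E\in\mca A$ the short exact sequence above together with $\tau_iE\in\mca A_i$, and the fact that $\tau_1E$ and $\tau_2E$ cannot both vanish, shows that $Z(E)$ lies strictly above the real axis or on the negative real axis. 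Thus $Z$ is a stability function on $\mca A$.

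The Harder--Narasimhan property is the crux, and this is where I expect the main difficulty, and where the second hypothesis \eqref{condition-b} is indispensable. One wants to produce the HN filtration of $0\neq E\in\mca A$ by interleaving the $\sigma_2$-HN filtration of the subobject $\tau_2E$ with the $\sigma_1$-HN filtration of the quotient $\tau_1E$ along the triangle $\tau_2E\to E\to\tau_1E$; the obstruction is that a high-phase HN factor of the quotient $\tau_1E$ can sit above a low-phase HN factor of the subobject $\tau_2E$, so the concatenated chain need not be in decreasing-phase order and must be re-sorted, which requires $\Hom$-control between the two slicings across the seam at integer phases rather than merely inside the window $(0,1]$. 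Hypothesis \eqref{condition-b} supplies exactly this: by the argument of the previous paragraph --- now with \eqref{condition-b} in the role of \eqref{condition-a} --- the rotated pieces $\mca P_i(a,a+1]$ also glue to the heart of a bounded $t$-structure on $\mb D$, giving a second glued heart covering the phase window $(a,a+1]$. Two compatible glued hearts whose windows have union of length $1+a>1$ let one define a candidate slicing $\mca P(\phi)$ consistently for every $\phi\in\bb R$, restricting to $\mca P_i$ on $\mb D_i$, verify the periodicity $\mca P(\phi+1)=\mca P(\phi)[1]$ and the orthogonality $\Hom_{\mb D}(\mca P(\phi_1),\mca P(\phi_2))=0$ for $\phi_1>\phi_2$, and run the induction --- where reasonableness of $\sigma_1$ and $\sigma_2$ is what forces termination --- producing finite HN filtrations in $\mca A$. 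I anticipate this bookkeeping to be the heaviest technical part of the proof.

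Finally, for reasonableness: a $\gl{\sigma_1}{\sigma_2}$-semistable $E$ of phase $\phi$ has $\tau_1E$ and $\tau_2E$ each either zero or $\sigma_i$-semistable of phase $\phi$ --- the sub/quotient pair $\tau_2E\hookrightarrow E\twoheadrightarrow\tau_1E$ cannot destabilise $E$, and the glued slicing restricts to $\mca P_i$ on $\mb D_i$ --- so $|Z(E)|=|Z_1(\tau_1E)|+|Z_2(\tau_2E)|$ with each nonzero summand bounded below by the reasonableness constant of the corresponding $\sigma_i$; hence $0<\min\{\epsilon_1,\epsilon_2\}\leq\inf\{|Z(E)|\mid E\text{ semistable}\}$, so $\gl{\sigma_1}{\sigma_2}$ is reasonable and, by Remark \ref{rmk:reasonable}, locally finite. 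Uniqueness is then immediate: a stability condition is determined by the heart of its $t$-structure and its central charge, and these have been pinned down as $\mca A$ and $Z$.
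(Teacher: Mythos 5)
This proposition is not proved in the paper at all: it is imported from Collins--Polishchuk \cite{MR2721656}, so the only possible comparison is with the original source, and your outline does track that source's argument. Condition \ref{condition-a} glues the two bounded $t$-structures along the semiorthogonal decomposition, the central charge is $Z_1\circ\tau_1+Z_2\circ\tau_2$ under $K_0(\mb D)\cong K_0(\mb D_1)\oplus K_0(\mb D_2)$, condition \ref{condition-b} supplies the second glued heart in the rotated window $(a,a+1]$, and reasonableness of the $\sigma_i$ both makes the Harder--Narasimhan process terminate and transfers to the glued condition via $|Z(E)|=|Z_1(\tau_1E)|+|Z_2(\tau_2E)|$ for glued semistable objects. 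Two caveats. Minor: in the $t$-structure gluing, orthogonality is not the only non-formal axiom --- the truncation triangle of $E$ must be assembled from the truncations of $\tau_1E$ and $\tau_2E$ by an octahedron, and the obstruction to that assembly is again a $\Hom^{\leq 0}\left(\mca P_1(0,1],\mca P_2(0,1]\right)$ group killed by \ref{condition-a}; this costs nothing extra but should be said. Major: the Harder--Narasimhan property, which you rightly identify as the crux, is described rather than established --- ``two compatible glued hearts let one define a candidate slicing \dots and run the induction'' names the strategy of \cite{MR2721656} without executing it, and that execution (the interleaving and re-sorting of the two HN filtrations across the seam at integer phases, the verification that the glued $\mca P(\phi)$ form a slicing, and the termination argument from reasonableness) is where essentially all the content of the proposition lives. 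So: right approach, accurate roadmap, but incomplete precisely at the step that makes the statement nontrivial.
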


\subsection{A category of morphisms}

Let $\mb D^{b}(X)$ be the bounded derived category of coherent sheaves on a Noetherian scheme $X$. 
The category of morphisms in $\mb D^{b}(X)$, introduced by the author, is one of generalizations of the derived category of representations of the $A_{2}$ quiver. 
Let us briefly recall the construction. 

Let $\ms D_{\mr{coh}}^{b}(X)$ be the stable infinity category of quasi-coherent sheaves on a Noetherian scheme $X$ with bounded coherent cohomologies. 
Then the homotopy category $\mr{h}(\ms D_{\mr{coh}}^{b}(X))$ of the infinity category is equivalent to the derived category $\mb D^{b}(X)$.

The homotopy category $\mr{h}(\ms D_{\mr{coh}}^{b}(X)^{\Delta^{1}})$ of the infinity category $\ms D_{\mr{coh}}^{b}(X)^{\Delta^{1}}$ of morphisms in the infinity category $\ms D_{\mr{coh}}^{b}(X)$ is a reasonable candidate of 
a triangulated category of morphisms in $\mb D^{b}(X)$. 
Thus we refer to $\mr{h}(\ms D_{\mr{coh}}^{b}(X)^{\Delta^{1}})$ as the \textit{category of morphisms in $\mb D^{b}(X)=\mr{h}(\ms D_{\mr{coh}}^{b}(X))$}. 

\begin{dfn}
Let $X$ be a Noetherian scheme. 
The category of morphisms in $X$ is denoted by $\mb M_{X}$. 
If $X$ is the affine scheme of a Noetherian ring $R$, we simply write $\mb M_{\Spec R}$ as $\mb M_{R}$. 
\end{dfn}

Note that a morphism $[f \colon E \to F]$ in $\mb D^{b}(X)$ determines an object in $\mb M_{X}$. 
There exist pairs of adjoint functors between $\mb D^{b}(X)$ and $\mb M_{X}$: 
\[
\xymatrix{
\mb D^{b}(X)	\ar[r]|-(.4)s&\ar@<-1.5ex>[l]|-{d_0}\ar@<1.5ex>[l]|-{d_1}	\mb M_{X}
}
; d_0 \dashv s \dashv d_1, 
\]
where $d_{0}([E\to F])=F$, $d_{1}([E \to F])=E$ and $s(E)=[\1_{E} \colon E \to E]$. 
Moreover $d_{1}$ has the right adjoint $j_{!}$ and $d_{0}$ has the left adjoint $j_{*}$
\[
j_{!} \colon \mb D^{b}(X) \to \mb M_{X}
\mbox{ and  } 
j_{*} \colon \mb D^{b}(X) \to \mb M_{X}, 
\]
where $j_{!}(E)=[E \to 0]$ and $j_{*}(E)=[0 \to E]$.

\begin{lem}\label{lem:SOD}
Let $X$ be a Noetherian scheme and define the subcategories of the triangulated category $\mb M_{X}$ by 
\begin{align*}
(\mb M_{X})_{/0}&:=
\left\{
[E \to 0]
\middle|  E \in \mr{h}(\ms D_{\mr{coh}}^{b}(X))
\right\},	
\\
(\mb M_{X})_{0/}&:=
\left\{
[ 0 \to E]
\middle|  E \in \mr{h}(\ms D_{\mr{coh}}^{b}(X))
\right\}, \text{and}
\\
(\mb M_{X})_{s}&:=
\left\{
[\1  \colon E \to E]
\middle|  E \in \mr{h}(\ms D_{\mr{coh}}^{b}(X))
\right\}. 
\end{align*}
The triangulated category $\mb M_{X}$ has three semiorthogonal decompositions: 
\begin{align}
&\<	(\mb M_{X})_{s}, (\mb M_{X})_{/0}\> ,	\label{sd_{0}}
\\
&\<(\mb M_{X})_{0/}, (\mb M_{X})_{s}\>, \text{and} \label{d_{1}s}
\\
&	\<(\mb M_{X})_{/0}, (\mb M_{X})_{0/}\>. \label{nokori}
\end{align}
\end{lem}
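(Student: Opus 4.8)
The plan is to obtain all three semiorthogonal decompositions at once from one formal principle: a fully faithful exact functor admitting a one-sided adjoint induces a semiorthogonal decomposition of its target, and the semiorthogonal complement of the essential image is exactly the kernel of that adjoint. The input is the chain of adjunctions recalled above, $d_{0}\dashv s\dashv d_{1}$, $d_{1}\dashv j_{!}$ and $j_{*}\dashv d_{0}$, together with the fact (established in \cite{morphismstability}) that $s$, $j_{!}$ and $j_{*}$ are fully faithful exact functors.

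First I would record two elementary points. Since $s$, $j_{!}$, $j_{*}$ are fully faithful exact functors, their essential images $(\mb M_{X})_{s}$, $(\mb M_{X})_{/0}$, $(\mb M_{X})_{0/}$ are full triangulated subcategories of $\mb M_{X}$. Moreover, reading off $d_{0}([E\to F])=F$ and $d_{1}([E\to F])=E$, one sees that $(\mb M_{X})_{/0}=\Ker d_{0}$ and $(\mb M_{X})_{0/}=\Ker d_{1}$, where $\Ker$ denotes the full subcategory of objects sent to a zero object.

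Next I would invoke the following standard fact (essentially Bondal--Kapranov): if $F\colon\mb A\to\mb D$ is a fully faithful exact functor with a left adjoint $L$, then $\mb D=\<\im F,\Ker L\>$, whereas if $F$ has a right adjoint $R$, then $\mb D=\<\Ker R,\im F\>$. Here the decomposition triangle of $x\in\mb D$ is the (rotated) cofiber sequence of the unit $x\to FL(x)$ in the first case and the fiber sequence of the counit $FR(x)\to x$ in the second; full faithfulness of $F$ forces the cofiber (resp.\ fiber) into the semiorthogonal complement and supplies the required $\Hom$-vanishing. Applying this three times proves the lemma: $F=s$ with left adjoint $d_{0}$ gives $\mb M_{X}=\<(\mb M_{X})_{s},\Ker d_{0}\>=\<(\mb M_{X})_{s},(\mb M_{X})_{/0}\>$, which is \eqref{sd_{0}}; $F=s$ with right adjoint $d_{1}$ gives $\mb M_{X}=\<\Ker d_{1},(\mb M_{X})_{s}\>=\<(\mb M_{X})_{0/},(\mb M_{X})_{s}\>$, which is \eqref{d_{1}s}; and $F=j_{!}$ with left adjoint $d_{1}$ gives $\mb M_{X}=\<(\mb M_{X})_{/0},\Ker d_{1}\>=\<(\mb M_{X})_{/0},(\mb M_{X})_{0/}\>$, which is \eqref{nokori} (the same last decomposition also comes out of $F=j_{*}$ with right adjoint $d_{0}$).

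I do not expect a genuine obstacle: the argument is purely formal once the $\infty$-categorical foundations are granted. The only point needing real care is that foundational input — that $\ms D^{b}_{\mr{coh}}(X)^{\Delta^{1}}$ is stable, that $d_{0},d_{1},s,j_{!},j_{*}$ are exact and satisfy the stated adjunctions, and that cones in $\mb M_{X}$ may be computed objectwise — all of which is available from \cite{morphismstability}. If one instead wants a self-contained proof, the mildly delicate step becomes the objectwise computation of the cofiber of each relevant unit or counit map (for example $\cof\bigl([E\xrightarrow{f}F]\to s(F)\bigr)\cong j_{!}(\cof f)$) and the check that it lands in the prescribed subcategory.
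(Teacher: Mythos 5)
Your argument is correct, and I checked your orientation conventions against the paper's (where $\<\mb D_1,\mb D_2\>$ means $\Hom(E_2,E_1)=0$ with triangles $E_2\to E\to E_1$): all three applications of the admissibility lemma land on the correct side. The mechanism is the same as the paper's --- everything is driven by the adjunctions $d_0\dashv s\dashv d_1$, $d_1\dashv j_!$, $j_*\dashv d_0$ --- but the packaging differs. The paper imports \eqref{sd_{0}} and \eqref{d_{1}s} from \cite[Lemma 2.14]{morphismstability} and only treats \eqref{nokori} by hand: it writes down the concrete sequence $j_*d_0(f)\to f\to j_!d_1(f)$ using the counit of $j_*\dashv d_0$ and the unit of $d_1\dashv j_!$ simultaneously, verifies it is distinguished because triangles in $\mb M_X$ are computed objectwise (it reads $0\to E\to E$ on sources and $F\to F\to 0$ on targets), and proves orthogonality by exactly the adjunction computation you give, $\Hom_{\mb M_X}(j_*E,j_!F)\cong\Hom(d_1j_*E,F)=\Hom(0,F)=0$. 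You instead derive all three decompositions uniformly from the Bondal--Kapranov principle, after identifying $(\mb M_X)_{/0}=\Ker d_0$ and $(\mb M_X)_{0/}=\Ker d_1$ and taking the (co)fiber of a single (co)unit. What your route buys is uniformity and independence from the cited lemma; what it costs is that full faithfulness of $s$, $j_!$, $j_*$ and exactness of all five functors must be granted as black-box input from \cite{morphismstability} --- precisely the foundational point you flag, and the same input the paper's objectwise computation relies on.
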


\begin{proof}
The first two decompositions follow from \cite[Lemma 2.14]{morphismstability}. 
Since $j_{*}$ is the left adjoint of $d_{0}$ and $j_{!}$ is the right adjoint of $d_{1}$, we have canonical morphisms 
$j_{*} \circ d_{0} (f) \to f$ and $f \to j_{!} \circ d_{1}(f)$ for $f \in \mb M_{X}$. 
Then the sequence 
\[
\xymatrix{
j_{*}\circ  d_{0}(f)	\ar[r]	&	f	\ar[r]	&	j_{!} \circ d_{1} (f) 	\ar[r]	&	j_{*} \circ d_{0}(f)[1]
}
\]
gives a distinguished triangle in $\mb M_{X}$ since the triangulated structure on $\mb M_{X}$ is defined object-wise. 

Note that $(\mb M_{X})_{/0}$ (resp. $(\mb M_{X})_{0/}$) is the essential image of $j_{!}$ (resp. $j_{*}$). 
The adjunction $d_{1} \dashv j_{!} $ implies 
\[
\Hom_{\mb M_{X}}(j_{*}E, j_{!}F) \cong \Hom_{\mb D^{b}(X)}(d_{1}\circ j_{*}(E), F) =\Hom_{\mb D^{b}(X)}(0, F)=0. 
\]
This gives the proof of (\ref{nokori})
\end{proof}

\begin{rmk}
Let $\mb M_{X} = \< \mb M_{1}, \mb M_{2} \>	$ be one of  semiorthogonal decompositions in Lemma \ref{lem:SOD}. 
Then both components $\mb M_{1}$ and $\mb M_{2}$ are equivalent to $\mb D^{b}(X)$ in any cases and equivalences are respectively given by 
\begin{equation}\label{eq:identification}
\begin{cases}
s \colon \mb D^{b}(X) \to ( \mb M_{X})_{s}\\
j_{!} \colon \mb D^{b}(X) \to (\mb M_{X})_{/0}\\
j_{*} \colon \mb D^{b}(X) \to (\mb M_{X})_{0/}. 
\end{cases}
\end{equation}
Throughout this note, we always identify $\mb D^{b}(X) $ with components of semiorthogonal decompositions of $\mb M_{X}$. 

\end{rmk}

Let $(\mr{coh} \, X )^{\Delta^{1}}$ be the category of morphisms in the abelian category $\mr{coh}\, X$ of coherent sheaves on $X$. 
The category $(\mr{coh} \, X )^{\Delta^{1}}$ is also abelian, and 
we obtain the bounded derived category $\mb D^{b}\left((\mr{coh} \, X )^{\Delta^{1}}\right)$ by the localization of quasi-isomorphisms. 

\begin{prop}[{\cite[Corollary 6.2]{morphismstability}}]\label{prop:quiver}
Let $X$ be a Noetherian scheme. The triangulated category $\mb D^{b}\left((\mr{coh} \, X )^{\Delta^{1}}\right)$ is equivalent to $\mb M_{X}$. 
In particular the category $\mb M_{X}$ has a natural bounded $t$-structure $(\mb M_{X}^{\leq 0}, \mb M_{X}^{\geq 1})$ whose heart is equivalent to the abelian category $(\mr{coh}\, X)^{\Delta^{1}}$. 
\end{prop}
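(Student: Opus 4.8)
The plan is to realize $\mb M_{X}$ as the bounded derived category of the abelian category $(\mr{coh}\, X)^{\Delta^{1}}$ by equipping a natural enhancement of $\mb M_{X}$ with a bounded $t$-structure whose heart is $(\mr{coh}\, X)^{\Delta^{1}}$, and then invoking the universal property of the bounded derived category. Concretely, I would equip the stable infinity category $\ms C:=\ms D^{b}_{\mr{coh}}(X)^{\Delta^{1}}$ with the \emph{componentwise} $t$-structure, in which an arrow $[E\to F]$ is connective (resp.\ coconnective) precisely when both $E$ and $F$ are connective (resp.\ coconnective) for the standard $t$-structure on $\ms D^{b}_{\mr{coh}}(X)$. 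Semiorthogonality of the two aisles is immediate from that of the standard $t$-structure on $\ms D^{b}_{\mr{coh}}(X)$, and for an arrow $[E\to F]$ the sequence $[\tau^{\leq 0}E\to\tau^{\leq 0}F]\to[E\to F]\to[\tau^{\geq 1}E\to\tau^{\geq 1}F]$ is a cofibre sequence in $\ms C$ because cofibres in $\ms C$ are computed vertex-wise --- exactly the observation, used already in the proof of Lemma \ref{lem:SOD}, that the triangulated structure on $\mb M_{X}$ is defined object-wise. Hence this is a bounded $t$-structure, and its heart is the ordinary $1$-category $(\mr{coh}\, X)^{\Delta^{1}}$ of arrows between coherent sheaves.

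By the realization functor attached to a stable infinity category carrying a bounded $t$-structure, one then obtains an exact functor $\mr{real}\colon \mb D^{b}\left((\mr{coh}\, X)^{\Delta^{1}}\right)\to \mr{h}(\ms C)=\mb M_{X}$ restricting to the identity on the common heart $(\mr{coh}\, X)^{\Delta^{1}}$. Since the $t$-structure on $\ms C$ is bounded, $\mb M_{X}$ is generated as a triangulated category by its heart, so $\mr{real}$ is essentially surjective; it remains to prove full faithfulness.

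For full faithfulness I would compare the two natural semiorthogonal decompositions into two copies of $\mb D^{b}(X)$. On the source side, $(\mr{coh}\, X)^{\Delta^{1}}$ is the category of modules over the sheaf of triangular algebras $\left(\begin{smallmatrix}\mca O_{X}&\mca O_{X}\\0&\mca O_{X}\end{smallmatrix}\right)$, so its bounded derived category carries a semiorthogonal decomposition by two copies of $\mb D^{b}(X)$, embedded via the exact functors $E\mapsto[E\to 0]$ and $E\mapsto[0\to E]$; on the target side this is the decomposition $\mb M_{X}=\<(\mb M_{X})_{/0},(\mb M_{X})_{0/}\>$ of Lemma \ref{lem:SOD}, via $j_{!}$ and $j_{*}$. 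The functor $\mr{real}$ intertwines these embeddings (the relevant functors $d_{1},j_{!},d_{0},j_{*}$ are $t$-exact for the componentwise $t$-structure and are already identified on the abelian level, hence lift compatibly), and it restricts to the identity of $\mb D^{b}(X)$ on each semiorthogonal component. Therefore $\mr{real}$ is an equivalence as soon as it induces isomorphisms on the gluing groups $\Hom_{\mb M_{X}}(j_{!}E,j_{*}F[n])$: on the $\mb M_{X}$-side these are computed from the pullback formula for mapping spaces in the arrow infinity category as $\Hom_{\mb D^{b}(X)}(E,F[n-1])$, while on the derived-abelian side the same answer comes out of the long exact sequence associated to the exact sequence $0\to[0\to E]\to[E\xrightarrow{\1}E]\to[E\to 0]\to 0$, i.e.\ the fact that the cofibre of $j_{!}\to j_{*}$ is the diagonal functor $s\simeq\1_{\mb D^{b}(X)}$ up to shift. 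Comparing, $\mr{real}$ is fully faithful, hence an equivalence; the final assertion follows by transporting the standard $t$-structure on $\mb D^{b}\left((\mr{coh}\, X)^{\Delta^{1}}\right)$ along $\mr{real}$.

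The step I expect to be the main obstacle is the first one: checking rigorously that $\ms D^{b}_{\mr{coh}}(X)^{\Delta^{1}}$ is a genuine stable infinity-categorical enhancement of $\mb M_{X}$ on which the componentwise families of subcategories really do constitute a $t$-structure --- the non-formal input being that cofibres and truncations in a functor infinity category are computed pointwise --- together with the identification of its heart with the ordinary $1$-category $(\mr{coh}\, X)^{\Delta^{1}}$. Once this is settled, the realization functor and the semiorthogonal-decomposition bookkeeping are routine, the only residual care being to pin down the cohomological shift in the gluing functor consistently on the two sides.
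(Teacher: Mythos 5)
The paper does not actually prove Proposition \ref{prop:quiver}: it is imported verbatim from \cite[Corollary 6.2]{morphismstability}, so there is no in-paper argument to measure yours against. Judged on its own, your outline is a sound and essentially standard way to obtain the equivalence: the componentwise $t$-structure on $\ms D^{b}_{\mr{coh}}(X)^{\Delta^{1}}$ is a genuine bounded $t$-structure (your Mayer--Vietoris/pullback description of mapping spaces in the arrow category gives both the orthogonality and the identification of the heart with the ordinary category $(\mr{coh}\,X)^{\Delta^{1}}$), the realization functor is then essentially surjective by boundedness, and your computation $\Hom_{\mb M_{X}}(j_{!}E,j_{*}F[n])\cong\Hom_{\mb D^{b}(X)}(E,F[n-1])$ on the target, matched against the long exact sequence of $0\to j_{*}E\to sE\to j_{!}E\to 0$ on the source, is correct and is exactly the right gluing datum for the decomposition \eqref{nokori} of Lemma \ref{lem:SOD}.

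Two points deserve more care than you give them, though neither is fatal. First, since $(\mr{coh}\,X)^{\Delta^{1}}$ has no reason to have enough projectives or injectives for general Noetherian $X$, you cannot invoke the projective-resolution universal property of the derived category; you need the realization functor attached to a bounded $t$-structure on a stable $\infty$-category (via filtered objects or an $f$-category), and should say so. Second, your dévissage presupposes that $\mb D^{b}\bigl((\mr{coh}\,X)^{\Delta^{1}}\bigr)$ itself carries the semiorthogonal decomposition by two copies of $\mb D^{b}(X)$ with $j_{!}$ and $j_{*}$ fully faithful at the derived level; at the abelian level the adjoints of $j_{!}$ and $j_{*}$ are only half-exact, so this is not formal and is where the triangular-algebra (or recollement) input genuinely enters. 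A slightly cleaner route avoiding this bookkeeping is to verify directly that $\Ext^{n}_{(\mr{coh}\,X)^{\Delta^{1}}}(f,g)\to\Hom_{\mb M_{X}}(f,g[n])$ is an isomorphism for all $f,g$ in the common heart, which is the standard criterion for the realization functor to be an equivalence and reduces, by your pullback formula, to the same comparison of Ext groups in $\mb D^{b}(X)$.
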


\begin{rmk}\label{rmk:quiver}
\begin{enumerate}
\item We refer to the $t$-structure $(\mb M_{X}^{\leq 0}, \mb M_{X}^{\geq 1})$ as the \textit{canonical $t$-structure on  $\mb M_{X}$}. 

\item If $X$ is $\Spec \mb k$ of a field $\mb k$ then $(\mr{coh}\, X)^{\Delta^{1}}$ is nothing but the abelian category of finite dimensional representations of the $A_{2}$ quiver. 

\end{enumerate}
\end{rmk}


\section{Stability conditions on affine schemes}

We study the space of stability condition on the derived category of an affine Noetherian scheme. 

\begin{lem}\label{lem:isom}
Let $R$ be a Noetherian domain with $\dim R >0$. 
Suppose that an $R$-module $M$ satisfies the following condition
\begin{itemize}
\item The morphism $\mu_{r} \colon M \to M$ is an isomorphism for any $r \in R \setminus \{0\}$. 
\end{itemize}
Then $M$ is zero. 
\end{lem}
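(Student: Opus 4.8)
The plan is to pass to a suitable localisation and invoke Nakayama's lemma. Assume for contradiction that $M\neq 0$. Since $M$ is a finitely generated module over the Noetherian ring $R$, its support $\Supp M=V(\ann{M})$ is a non-empty closed subset of $\Spec R$, and every non-empty closed subset of $\Spec R$ contains a closed point; so I may fix a maximal ideal $\mf m$ with $M_{\mf m}\neq 0$. The hypothesis $\dim R>0$ enters only to guarantee that $\mf m\neq 0$: a domain having a zero maximal ideal is a field, hence of dimension $0$.

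Having fixed such an $\mf m$, I choose $r\in\mf m$ with $r\neq 0$ and apply the assumption to this $r$. The multiplication $\mu_r\colon M\to M$ is an isomorphism, in particular surjective, so $rM=M$; localising at $\mf m$ (localisation being exact) gives $rM_{\mf m}=M_{\mf m}$, and since $r$ lies in the maximal ideal $\mf m R_{\mf m}$ of the Noetherian local ring $R_{\mf m}$ this says $\mf m R_{\mf m}\cdot M_{\mf m}=M_{\mf m}$. As $M_{\mf m}$ is finitely generated over $R_{\mf m}$, Nakayama's lemma forces $M_{\mf m}=0$, contradicting the choice of $\mf m$; hence $M=0$. (Equivalently, and without localising: inverting every non-zero $r$ makes $M$ a vector space over the fraction field $K$ of $R$, so $M$ is simultaneously a finitely generated $R$-module and a $K$-vector space; since $\dim R>0$ the $R$-module $K$ is not finitely generated, so a non-zero $M$ would contain a copy of $K$ as an $R$-submodule, violating Noetherianity of $M$.)

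I do not expect a genuine obstacle here: the statement is elementary commutative algebra. The one point that must not be overlooked is that $M$ has to be finitely generated — the claim is false otherwise, as $M=K$ already shows — so the argument is forced to use finiteness of $M$; this is harmless in all our applications, where $M$ arises as a cohomology module of a bounded complex of coherent sheaves on $X$.
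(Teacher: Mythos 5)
Your argument is correct and is in substance the paper's own proof run at a single maximal ideal: the paper deduces $\Supp M=\Spec R$ from $\ann{M}=(0)$ and then contradicts this with $M\otimes R/(r)=0$ for a non-unit $r\neq 0$, which is exactly the Nakayama step you carry out in $R_{\mf m}$. Your closing observation is the one substantive point of difference and it is well taken: the lemma as printed omits the hypothesis that $M$ be finitely generated, without which the statement is false (the fraction field $K$ satisfies the hypothesis), and the paper's step ``$M\otimes R/(r)=0$, hence $\Supp M$ is a proper closed subset'' silently uses finite generation as well; since every $M$ to which the lemma is applied is a cohomology module of an object of $\mb D^{b}(X)$, hence coherent, the omission is harmless but should be made explicit.
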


\begin{proof}
Suppose to the contrary that $M \neq 0$. 
The assumption implies $\ann{M}=(0)$. 
Thus the support of $M$ is $\Spec R=X$. 

One can choose $r \in R =H^{0}(X, \mca O_{X})$ such that $r$ is not unit in $R$ since $\dim X>0$. 
Since the morphism $\mu_{r} \colon M \to M$ is an isomorphism, we have $M \otimes R/(r)=0$. 
Thus $\Supp M$ is a proper closed subset of $X$ and this gives a contradiction. 
Hence $M$ is zero.  
\end{proof}

\begin{lem}\label{lem:keylem}
Let $R$ be a Noetherian ring. 
Suppose that an $R$-module $M$ satisfies the condition (Ism). 
Then the following holds. 
\begin{enumerate}
\item $\ann{M}$ is a prime ideal. 
\item If $M$ is non-zero, then $\ann{M}=\ann{m}$ for any $m \in M\setminus \{0\}$. 
In particular $\ann{M}$ is the unique associated prime of $M$. 
\item If $M$ is non-zero, then $\dim \Supp(M)=0$. 
\end{enumerate}
\end{lem}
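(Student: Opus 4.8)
The plan is to exploit the rigid dichotomy imposed by (Ism): for an $R$-module $M$ and any $r \in R$, either $\mu_{r} = 0$, i.e.\ $r \in \ann{M}$, or $\mu_{r}$ is an automorphism of $M$---there is no intermediate behaviour---so that $R \setminus \ann{M}$ is precisely the set of elements of $R$ acting invertibly on $M$. I would assume $M \neq 0$ throughout (when $M = 0$, statements (2) and (3) are vacuous), so that $1 \notin \ann{M}$ and $\ann{M}$ is a proper ideal. For (1), given $ab \in \ann{M}$ with $a \notin \ann{M}$, the map $\mu_{a}$ is an automorphism, and since $\mu$ is a ring homomorphism we get $\mu_{b} = \mu_{a}^{-1}\circ \mu_{ab} = 0$; hence $b \in \ann{M}$ and $\ann{M}$ is prime.

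For (2), the inclusion $\ann{M} \subseteq \ann{m}$ holds for every $m$. Conversely, if $rm = 0$ for some nonzero $m$, then $\mu_{r}$ fails to be injective, so by the dichotomy it cannot be an automorphism and must vanish, i.e.\ $r \in \ann{M}$; thus $\ann{m} = \ann{M}$ for all $m \in M \setminus \{0\}$. Since $R$ is Noetherian and $M \neq 0$, the set $\mathrm{Ass}(M)$ is non-empty, and each associated prime has the form $\ann{m}$ for some nonzero $m$ and hence equals $\ann{M}$; combined with (1), this gives $\mathrm{Ass}(M) = \{\ann{M}\}$.

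For (3), I would reduce to showing $\dim R/\ann{M} = 0$: one always has $\Supp M \subseteq V(\ann{M})$, and $\Supp M \neq \emptyset$ because $M \neq 0$, so $\dim \Supp M = 0$ follows once $\dim V(\ann{M}) = \dim R/\ann{M}$ is seen to vanish. The key move is to reuse Lemma \ref{lem:isom}: by (1) the ring $A := R/\ann{M}$ is a Noetherian domain, and $M$ is naturally an $A$-module on which every nonzero element of $A$---being the class of some $r \notin \ann{M}$---acts as an automorphism, so if $\dim A > 0$ then Lemma \ref{lem:isom} forces $M = 0$, a contradiction. I expect this last point to be the only real obstacle: since $M$ need not be finitely generated, one cannot directly apply a Nakayama-type argument at a prime strictly containing $\ann{M}$, and passing through Lemma \ref{lem:isom}, whose proof handles arbitrary modules via a support computation, is exactly what circumvents this. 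Parts (1) and (2) are otherwise formal consequences of the zero-or-automorphism dichotomy.
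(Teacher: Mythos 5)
Your proof is correct and follows essentially the same route as the paper: parts (1) and (2) use the zero-or-automorphism dichotomy exactly as in the paper's argument, and part (3) passes to the Noetherian domain $R/\ann{M}$ and invokes Lemma \ref{lem:isom} to force $\ann{M}$ to be maximal, just as the paper does. The extra details you supply (non-emptiness of $\mathrm{Ass}(M)$, the inclusion $\Supp M \subseteq V(\ann{M})$) only make explicit what the paper leaves implicit.
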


\begin{proof}
Suppose that $ab\in \ann{M}$ and $a\not\in \ann{M}$. 
Then $\mu_{ab}=\mu_{a}\mu_{b}$ is the zero morphism. 
Since $\mu_{a}$ is an isomorphism by the condition (Ism), $\mu_{b}$ has to be zero and $b$ is in $\ann{M}$

Clearly we have $\ann{M} \subset \ann{m}$ for any $m \in M \setminus \{0\}$. 
Let $r$ be in $\ann{m}$. 
Then $\mu_{r}$ is not isomorphism. 
The condition (Ism) implies that $\mu_{r}$ is zero. 
Thus we see $\ann{M}=\ann{m}$. 
The last part of the second assertion is obvious.

To complete the proof, we show the assertion (3). 
Let $\mf p$ be the prime ideal $\ann{M}$. 
If $\mf p$ is not maximal, $M$ satisfies the condition (Ism) as $R/\mf p$-modules and we have $\dim R/\mf p>0$. 
Then Lemma \ref{lem:isom} implies $M=0$. 
Hence $\mf p$ has to be maximal and we see $\dim \Supp (M)=\dim \Supp (R/\mf p)=0$. 
\end{proof}

\begin{lem}\label{lem:zero}
Let $R$ be a Noetherian ring with $\dim R>0$. 
Suppose that an object $E \in \mb D^{b}(\Spec R)$ satisfies the condition $\mathrm{(Ism)}$. 
If $E$ is non-zero then $\dim \Supp (E) =0$. 
\end{lem}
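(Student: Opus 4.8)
The plan is to reduce the statement about an object $E \in \mb D^{b}(\Spec R)$ to a statement about its cohomology modules, to which Lemma~\ref{lem:keylem} applies. The key observation is that the multiplication $\mu_{r} \colon E \to E$ is functorial and compatible with the cohomology functors $H^{i}$: for each $i$, taking $H^{i}$ of $\mu_{r}$ gives exactly the multiplication by $r$ on the $R$-module $H^{i}(E)$. So if $\mu_{r}$ is an isomorphism on $E$, then $\mu_{r}$ is an isomorphism on every $H^{i}(E)$; and if $\mu_{r}$ is the zero morphism on $E$, then it is zero on every $H^{i}(E)$.

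First I would fix a non-zero $E$ and let $I \subset R$ be the ideal of all $r \in R$ with $\mu_{r} \colon E \to E$ the zero morphism. Since $E$ satisfies (Ism), for $r \notin I$ the morphism $\mu_{r}$ is an isomorphism, hence $\mu_{r}$ is an isomorphism on each $H^{i}(E)$; for $r \in I$, $\mu_{r}$ is zero on each $H^{i}(E)$. Therefore every non-zero cohomology module $H^{i}(E)$ satisfies the condition (Ism) as an $R$-module. Since $E \neq 0$, at least one $H^{i}(E)$ is non-zero, and Lemma~\ref{lem:keylem}(3) gives $\dim \Supp H^{i}(E) = 0$ for each such $i$. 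Finally, $\Supp E = \bigcup_{i} \Supp H^{i}(E)$ is a finite union of zero-dimensional closed subsets, hence zero-dimensional, so $\dim \Supp E = 0$.

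One technical point to verify carefully is the claim that $H^{i}(\mu_{r})$ is multiplication by $r$ on $H^{i}(E)$. This follows from the fact that $\mu$ in \eqref{multi} is a morphism of $R$-algebras and that $H^{i} \colon \mb D^{b}(\Spec R) \to \mod R$ is an $R$-linear functor, so $H^{i}$ carries the action of $r \in R = \Hom_{\mb D^{b}(X)}(\mca O_{X}, \mca O_{X})$ on $E$ to the action of $r$ on $H^{i}(E)$ by the same global function; alternatively, one notes that $\mu_{r}$ on $E$ is induced by the chain map given by multiplication by $r$ on a representing complex, whose effect on cohomology is visibly multiplication by $r$. The only other point is that the set $I$ of ``killing'' scalars is genuinely an ideal and its complement consists of scalars acting isomorphically — but this is immediate from (Ism) together with $\mu_{r+r'} = \mu_{r} + \mu_{r'}$ and $\mu_{rr'} = \mu_{r} \mu_{r'}$. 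I do not anticipate a serious obstacle; the main content is the clean passage from the triangulated-category statement to the module-level statement, after which Lemma~\ref{lem:keylem} does the work.
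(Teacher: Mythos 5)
Your proposal is correct and follows essentially the same route as the paper: pass to the cohomology modules $H^{i}(E)$, note that $H^{i}(\mu_{r})$ is again multiplication by $r$ so that each non-zero $H^{i}(E)$ inherits (Ism), apply Lemma~\ref{lem:keylem}(3), and conclude via $\Supp E = \bigcup_{i}\Supp H^{i}(E)$. Your write-up merely supplies more detail (the ideal $I$ of annihilating scalars, the functoriality of $H^{i}$) than the paper's two-line argument.
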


\begin{proof}
Let $\mu_{r}^{i} \colon H^{i}(E) \to H^{i}(E)$ be the $i$-th cohomology of the morphism $\mu_{r} \colon E \to E$. 
Note that $\mu_{r}^{i}$ is also the multiplication by $r$ on $H^{i}(E)$. 
Since $E$ satisfies the condition (Ism), so does $H^{i}(E)$ if $H^{i}(E) \neq 0$. 
Thus Lemma \ref{lem:keylem} implies $\dim \Supp H^{i}(E)=0$ and we have the desired assertion. 
\end{proof}

\begin{thm}\label{thm:nonexistance}
Let $X$ be an affine Noetherian scheme with $\dim X>0$. 
Then the set $\Stab{\mb D^{b}(X)}$ is empty. 
\end{thm}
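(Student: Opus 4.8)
The plan is to argue by contradiction: suppose $\sigma = (Z, \mca P) \in \Stab{\mb D^{b}(X)}$ with $X = \Spec R$ and $\dim R > 0$. The strategy is to produce a $\sigma$-semistable object that satisfies the isomorphic property (Ism), and then derive a contradiction from Lemma \ref{lem:zero} together with the structure of $Z$. The key observation is that for any global function $r \in R$ and any $\sigma$-semistable object $E$ of phase $\phi$, the multiplication $\mu_r \colon E \to E$ is a morphism in $\mca P(\phi)$ (an endomorphism in a quasi-abelian heart), so its image, kernel and cokernel again lie in $\mca P(\phi)$; hence if $\mu_r$ is nonzero but not an isomorphism, either $\im \mu_r$ or $\ker \mu_r$ is a proper nonzero subobject, contradicting $\sigma$-stability — but only stability, not semistability, gives this directly. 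So the first step is to pass to a $\sigma$-\emph{stable} object.

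First I would show such a stable object exists. Since $\Stab{\mb D^{b}(X)}$ is assumed nonempty and locally finite, take any nonzero $E \in \mb D^{b}(X)$; its Harder–Narasimhan filtration produces a semistable factor, and local finiteness of $\mca P(\phi)$ (finite length) then yields a $\sigma$-stable object $F$ of some phase $\phi$. Now for each $r \in R$, the endomorphism $\mu_r \in \Hom_{\mb D^b(X)}(F,F) = \Hom_{\mca P(\phi)}(F,F)$ is, by simplicity of $F$ in $\mca P(\phi)$ (a finite-length abelian category where simple objects have division-ring endomorphisms, or at least: any endomorphism of a simple object is $0$ or an isomorphism), either zero or an isomorphism. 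That is exactly condition (Ism). Hence $F$ satisfies (Ism), so by Lemma \ref{lem:zero}, $\dim \Supp F = 0$.

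Next I would exploit $\dim \Supp F = 0$ to finish. The support being zero-dimensional and closed means $\Supp F$ is a finite set of closed points; concretely, by Lemma \ref{lem:keylem} applied to the cohomology modules, $\ann{H^i(F)}$ is a maximal ideal $\mf m$, and in fact all nonzero cohomologies are annihilated by a power of a single maximal ideal $\mf m$ (the stable object $F$ is ``supported at one point''). Then $F$ lies in the thick subcategory of $\mb D^b(X)$ generated by $\mca O_X / \mf m$; choosing $r \in \mf m$ that is not nilpotent in $R$ (possible since $\dim R > 0$, so $\mf m$ strictly contains a prime, and $R_{\mf m}$ has dimension $\geq 1$ — more carefully, one picks $r \in R$ in $\mf m$ but not in every associated prime, or uses that $r$ restricted to the local ring is a nonzerodivisor on some layer) gives $\mu_r$ nilpotent on each $H^i(F)$, hence $\mu_r$ is not an isomorphism but may be nonzero — this already contradicts (Ism) unless $\mu_r = 0$ for all $r \in \mf m$, i.e. $F$ is actually an $R/\mf m$-complex; but then a closer comparison with $Z$ on the finitely many simple objects $\mca O_X/\mf m$ (there is really only one up to the point) completes it. Alternatively, and more cleanly, one notes that once $\dim X > 0$ one can choose $r \in R$ non-nilpotent with $\mu_r^i$ nilpotent on $H^i(F)$ for the relevant $i$; then $\mu_r$ is nonzero (if $r \notin \ann{F}$) yet not an isomorphism, directly violating (Ism).

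The main obstacle I anticipate is the very last step: closing the gap between ``$\mu_r$ is an isomorphism or zero'' and producing a genuine contradiction with $\dim R > 0$. The subtlety is that a stable object could a priori be annihilated by a maximal ideal $\mf m$, in which case (Ism) is vacuously satisfiable and Lemma \ref{lem:zero} only tells us $\dim \Supp F = 0$, which is consistent. To rule this out one must use that $R$ itself has positive dimension to find a global function $r$ witnessing non-invertibility while remaining nonzero on $F$ — this requires choosing $r$ outside $\ann{F} = \mf m$ but such that $\mu_r$ still fails to be an isomorphism, which is impossible if $F$ is a module over the field $R/\mf m$. So the real content must be that the \emph{finitely many} stable objects (or the heart's simples) cannot all be $R/\mf m_i$-modules while $Z$ remains a valid stability function — or rather, the argument should be organized so that Lemma \ref{lem:zero} is applied not to one object but forces \emph{every} semistable object to be zero-dimensionally supported, and then a separate argument (perhaps the Theorem \ref{1st-thm} dimension-zero classification machinery, or a direct nilpotent-element argument on the heart) yields the contradiction with $\dim X > 0$. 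I would present the clean version: exhibit a single stable $F$, invoke (Ism) via simplicity, apply Lemma \ref{lem:zero}, and then choose a non-nilpotent $r$ in $R$ not annihilating $F$ to contradict (Ism) directly.
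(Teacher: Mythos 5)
Your first three steps --- extracting a $\sigma$-stable object $F$ from local finiteness, observing that simplicity forces every $\mu_{r}$ to be zero or an isomorphism so that $F$ satisfies (Ism), and invoking Lemma \ref{lem:zero} to conclude $\dim \Supp F = 0$ --- are exactly the paper's opening moves. But the final step you commit to does not close the argument, and you have in fact diagnosed the reason yourself: by Lemma \ref{lem:keylem} the cohomologies of such an $F$ are modules over a residue field $R/\mf m$, so every $r \in R$ acts on $F$ either as zero (if $r \in \mf m$) or as an isomorphism (if $r \notin \mf m$, since $r$ becomes a nonzero scalar of the field $R/\mf m$). There is no ``non-nilpotent $r$ not annihilating $F$'' whose multiplication fails to be an isomorphism on $F$; condition (Ism) is genuinely satisfied by such objects, and no single stable object yields a contradiction. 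Your own sentence ``this requires choosing $r$ outside $\ann{F} = \mf m$ but such that $\mu_r$ still fails to be an isomorphism, which is impossible'' is correct --- and it shows that the ``clean version'' you then propose cannot work.

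The missing idea is to apply the conclusion not to one stable object but to a decomposition of the structure sheaf. The paper takes the Harder--Narasimhan filtration of $\mca O_{X}$ and then Jordan--H\"older filtrations of its semistable factors (available because $\sigma$ is locally finite), writing $\mca O_{X}$ as a successive extension of finitely many $\sigma$-stable objects $A_{1}, \dots, A_{n}$. Each $A_{i}$ satisfies (Ism), hence $\dim \Supp A_{i} = 0$ by Lemma \ref{lem:zero}. Since the support of an extension is contained in the union of the supports of its two ends, $X = \Supp \mca O_{X} \subseteq \bigcup_{i=1}^{n} \Supp A_{i}$ is a finite union of zero-dimensional closed subsets, contradicting $\dim X > 0$. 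This is precisely the ``separate argument'' you were reaching for in your penultimate paragraph; without it the proof is incomplete.
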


\begin{proof}
We denote by $R$ the coordinated ring $H^{0}(X, \mca O_{X})$ of the affine scheme $X$. 
Suppose to the contrary that there exists a locally finite stability condition $\sigma \in \Stab {\mb D^{b}(X)}$.
Since $\sigma$ is locally finite, 
there exists a $\sigma$-stable object $A \in \mb D^{b}(X)$.

Note that any nonzero endomorphism $\varphi \colon A \to A$ is an isomorphism in $\mb D^{b}(X)$. 
Thus $A$ satisfies the condition (Ism) via the morphism (\ref{multi}). 
Hence we see $\dim \Supp(A)=0$ for any $\sigma$-stable object by Lemma \ref{lem:zero}. 

Taking the Harder-Narasimhan filtration and a Jordan-H\"{o}lder filtration, the structure sheaf $\mca O_{X}$ is given by a successive extension of finite $\sigma$-stable objects $\{A_{i}\}_{i=1}^{n}$. 
Thus we have $X= \bigcup_{i=1}^{n} \Supp A_{i}$ and this gives a contradiction since $\dim X\neq 0$. 
\end{proof}

Thus if $\dim X>0$ then there is nothing to study $\Stab{\mb D^{b}(X)}$. 
Next goal is to describe $\Stab{\mb D^{b}(X)}$ for the case $\dim X=0$. 

\begin{lem}\label{lem:negative}
Let $\mb D$ be a triangulated category, and $\mca A$ the heart of a bounded $t$-structure $(\mb D^{\leq 0}, \mb D^{\geq 1})$. 
The truncation functors with respect to the $t$-structure are respectively denoted by $\tau_{\leq 0} \colon \mb D \to \mb D ^{\leq  0}$ and $\tau_{\geq 1} \colon \mb  D \to \mb D^{\geq 1}$.  
The cohomology of $ E \in \mb D$ with respect to the $t$-structure is denoted by $H^{i}(E)$.

Suppose an object $E \in \mb D$ satisfies  
\[
\tau_{\geq m_{1}+1} E=0 \text{ and }\tau_{\leq m_{2}-1}E=0. 
\]
Then $ \Hom_{\mb D}(E, E[m_{2}-m_{1}]) \cong \Hom_{\mb D}(H^{m_{1}}(E), H^{m_{2}}(E))$. 
\end{lem}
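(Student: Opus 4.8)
The plan is to strip both the source and the target of $\Hom_{\mb D}(E, E[m_2-m_1])$ down to a single cohomology object by applying truncation triangles, the error terms being annihilated by the orthogonality $\Hom_{\mb D}(A,B)=0$ whenever $A\in\mb D^{\le p}$, $B\in\mb D^{\ge q}$ and $p<q$; this is just the axiom $\Hom_{\mb D}(\mb D^{\le 0},\mb D^{\ge 1})=0$ after a shift. First I would unwind the hypotheses: $\tau_{\ge m_1+1}E=0$ and $\tau_{\le m_2-1}E=0$ say precisely that $E\in\mb D^{\le m_1}\cap\mb D^{\ge m_2}$. (If $m_2>m_1$ then $E=0$ and both sides vanish, so we may assume $m_2\le m_1$.) Consequently $\tau_{\ge m_1}E\cong H^{m_1}(E)[-m_1]$, and, after shifting, $E[m_2-m_1]\in\mb D^{\ge m_1}$, while $E[m_2-m_1-1]\in\mb D^{\ge m_1}$ as well.

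Next I would apply $\Hom_{\mb D}(-,E[m_2-m_1])$ to the truncation triangle
\[
\tau_{\le m_1-1}E \to E \to H^{m_1}(E)[-m_1] \to (\tau_{\le m_1-1}E)[1].
\]
Since $\tau_{\le m_1-1}E\in\mb D^{\le m_1-1}$ whereas $E[m_2-m_1]$ and $E[m_2-m_1-1]$ both lie in $\mb D^{\ge m_1}$, the two outer terms $\Hom_{\mb D}(\tau_{\le m_1-1}E,E[m_2-m_1])$ and $\Hom_{\mb D}((\tau_{\le m_1-1}E)[1],E[m_2-m_1])$ vanish, so the long exact sequence collapses to
\[
\Hom_{\mb D}(E, E[m_2-m_1]) \cong \Hom_{\mb D}(H^{m_1}(E)[-m_1], E[m_2-m_1]) \cong \Hom_{\mb D}(H^{m_1}(E), E[m_2]).
\]

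Finally I would treat the remaining target: here $E[m_2]\in\mb D^{\ge 0}$ with $H^0(E[m_2])=H^{m_2}(E)$, hence a truncation triangle $H^{m_2}(E)\to E[m_2]\to \tau_{\ge 1}(E[m_2])\to H^{m_2}(E)[1]$. Applying $\Hom_{\mb D}(H^{m_1}(E),-)$ and using that $H^{m_1}(E)$ lies in the heart (so in $\mb D^{\le 0}$) while $\tau_{\ge 1}(E[m_2])$ and its shift by $[-1]$ lie in $\mb D^{\ge 1}$, the two outer terms vanish again, giving $\Hom_{\mb D}(H^{m_1}(E), E[m_2])\cong\Hom_{\mb D}(H^{m_1}(E),H^{m_2}(E))$. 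Composing the displayed isomorphisms yields the claim. There is essentially no obstacle; the only point needing care is keeping track of the cohomological amplitudes after the shift by $m_2-m_1$ so that the orthogonality applies at each step, and observing that the argument is uniform even when $H^{m_1}(E)$ or $H^{m_2}(E)$ happens to be zero.
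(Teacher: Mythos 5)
Your proof is correct and takes essentially the same approach as the paper's: two truncation triangles whose outer $\Hom$-terms are killed by the $t$-structure orthogonality $\Hom_{\mb D}(\mb D^{\leq p},\mb D^{\geq q})=0$ for $p<q$. The only cosmetic differences are that the paper first normalizes $m_{2}=0$ by a shift and truncates the target before the source, whereas you keep both indices and truncate the source first.
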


\begin{proof}
For the simplicity, we may assume $m_{2}=0$ by shifts, if necessary. 

The truncation functors give a distinguished triangle 
\[
\xymatrix{
(\tau_{\geq 1}E )[-1]\ar[r]	&	\tau_{\leq 0}E \ar[r]	&	E	\ar[r]	& \tau_{\geq 1} E	. 
}
\]
Since $E[m_{1}]$ belongs to $D^{\leq 0}$, we have 
$\Hom_{\mb D}(E[m_{1}], \tau_{\geq 1}E)=\Hom_{\mb D}(E[m_{1}], (\tau_{\geq 1}E)[-1])=0$. 
Thus we see 
\[
\Hom_{\mb D}(E[m_{1}], E) \cong \Hom_{\mb D}(E[m_{1}], \tau_{\leq 0}E ). 
\]
There is a distinguished triangle 
\[
\xymatrix{
\tau_{\leq -1}(E[m_{1}])	\ar[r]	&	E[m_{1}]	\ar[r]	&	\tau_{\geq 0}(E[m_{1}] )	\ar[r]	&	\tau_{\leq -1}(E[m_{1}])	[1]. 
}
\]
The assumption implies $\tau_{\leq 0}E \in \mb D^{\geq 0}$. 
Then the vanishings 
\[
\Hom_{\mb D}(\tau_{\leq -1}\left(E[m_{1}]),  \tau_{\leq 0}E\right) = \Hom_{\mb D}\left(\tau_{\leq -1}(E[m_{1}])[1],  \tau_{\leq 0}E\right) =0
\]
imply the isomorphism:  
\[
\Hom_{\mb D}(E[m_{1}], \tau_{\leq 0}E) \cong \Hom_{\mb D}\left(\tau_{\geq 0} (E[m_{1}]), \tau_{\leq 0}E \right). 
\]
Since $\tau_{\leq 0}E$ (resp. $\tau_{\geq 0}(E[m_{1}])$) is nothing but $H^{0}(E)$ (resp. $H^{m_{1}}(E)$), we obtain the desired assertion. 
\end{proof}

\begin{lem}
Let $R$ be a zero-dimensional Noetherian local ring. 
Then $\Stab{\mb D^{b}(\Spec R)}$ is non-empty.  
\end{lem}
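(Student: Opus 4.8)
The plan is to construct an explicit stability condition on $\mb D^{b}(\Spec R)$ when $R$ is a zero-dimensional Noetherian local ring. The key observation is that such a ring is Artinian local, so it has a unique simple module $k = R/\mf m$, and every finitely generated $R$-module has finite length. Equivalently, the abelian category $\mr{coh}(\Spec R) = \mod R$ has a single simple object up to isomorphism, and the Grothendieck group $K_{0}(\mb D^{b}(\Spec R))$ is free of rank one, generated by the class of $k$.

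**First I would** take the standard $t$-structure on $\mb D^{b}(\Spec R)$ whose heart $\mca A$ is $\mod R$, and define the central charge $Z \colon K_{0}(\mb D^{b}(\Spec R)) \to \bb C$ by sending the class of $k$ to a fixed complex number in the upper half plane, say $Z(k) = -1$ (or any element of $\bb H$; all choices are related by the $\widetilde{\mr{GL}}_{2}^{+}(\bb R)$-action). For a nonzero object $M \in \mca A$ of length $\ell$, we then have $Z(M) = -\ell$, so $Z$ maps every nonzero object of $\mca A$ into the strictly negative real axis; in particular $\Im Z(M) = 0$ and the phase is identically $1$. **Next** I would define the slicing $\mca P$ by declaring $\mca P(1) = \mca A$ and $\mca P(\phi + 1) = \mca A[1]$ for all integer shifts, with $\mca P(\phi) = 0$ for non-integer $\phi$. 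The Harder–Narasimhan property is immediate: every object has a canonical filtration by its cohomology objects with respect to the standard $t$-structure, and these are the HN factors since all nonzero objects of $\mca A$ have the same phase. The support property / local finiteness follows because $|Z(M)| = \ell(M) \geq 1$ for every nonzero $M \in \mca A$, so $\sigma$ is in fact \emph{reasonable} in the sense of Definition \ref{dfn-reasonable}, hence locally finite by Remark \ref{rmk:reasonable}.

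**The main point to verify carefully** is that $(Z,\mca P)$ genuinely satisfies all the axioms of a stability condition — specifically that $\mca P$ is a slicing (the categories $\mca P(\phi)$ are closed under the required operations and satisfy the Hom-vanishing $\Hom(\mca P(\phi_{1}), \mca P(\phi_{2})) = 0$ for $\phi_{1} > \phi_{2}$) and that $Z(E) \in \bb R_{>0} \cdot e^{i\pi\phi}$ for $0 \neq E \in \mca P(\phi)$. These are routine once one notes that $\mca P(\phi)$ for $\phi \in \bb Z$ is just a shift of $\mca A$ and that the Hom-vanishing between different shifts of $\mca A$ is the defining property of the $t$-structure. **I do not expect a genuine obstacle here**; the statement is really the observation that a bounded $t$-structure with Noetherian heart of finite length and a single simple object admits the "trivial" stability condition supported on a single phase, which is visibly locally finite. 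The only mild care needed is to record that $R$ zero-dimensional Noetherian local implies Artinian, so that $\mca A = \mod R$ has finite length and rank-one Grothendieck group, which is what makes the central charge well-defined and the HN filtrations finite.
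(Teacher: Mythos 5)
Your proposal is correct and is essentially the paper's own argument: the paper likewise takes the heart $\mr{coh}(\Spec R)$ with $Z(R/\mf m)=-1$, observes that every object is semistable so the Harder--Narasimhan property is automatic, invokes Bridgeland's correspondence between stability conditions and stability functions on hearts, and deduces local finiteness from the heart being Artinian and Noetherian. Your extra remark that $\sigma$ is reasonable (since $|Z|\geq 1$ on nonzero objects of the heart) is a harmless strengthening of the same point.
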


\begin{proof}
Put $X=\Spec R$. 
Recall that any object in $\mr{coh}(X)$ is given by a successive extension of the residue field $R/ \mf m$. 
Hence one can define a group homomorphism $Z \colon K_{0}(\mb D^{b}(X)) \to \bb C$ by $Z(R/\mf m)=-1$. 
Then the pair $\sigma=(\mr{coh}(X), Z)$ has the Harder-Narasimhan property in the sense of \cite[Definition 2.3]{MR2373143} since any object  in $\mr{coh}(X)$ is $\sigma$-semistable. 
Thus $\sigma$ is a stability condition on $\mb D^{b}(X)$ by \cite[Proposition 5.3]{MR2373143}. 
The locally finiteness is obvious since the abelian category $\mr{coh}(X)$ is Artinian and Noetherian. 
Thus $\Stab {\mb D^{b}(\Spec R)}$ is not empty. 
\end{proof}

\begin{prop}\label{prop:zerolocal}
Let $R$ be a zero-dimensional Noetherian local ring. 
Then $\Stab{\mb D^{b}(\Spec R)}$ is isomorphic to $\bb C$. 
\end{prop}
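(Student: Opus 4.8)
The plan is to determine all possible hearts. I will show that for any $\sigma = (Z, \mca P) \in \Stab{\mb D^{b}(\Spec R)}$ the heart $\mca P((0,1])$ is a shift of the standard heart $\mr{coh}(\Spec R)$, that $S := R/\mf m$ is $\sigma$-stable, and that $\sigma$ is recovered from the pair $(\phi_{\sigma}(S), |Z_{\sigma}(S)|) \in \bb R \times \bb R_{>0}$; this pair will give the identification with $\bb C$. As preliminaries: since $R$ is Artinian local, every coherent sheaf on $\Spec R$ is a finite-length module with the unique simple $S$, so $K_{0}(\mb D^{b}(\Spec R))$ is free of rank one on $[S]$ and $Z$ is determined by $z := Z(S)$. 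A Harder--Narasimhan filtration of any nonzero object yields a $\sigma$-semistable object $A$, and $[A] = n[S]$ with $n \neq 0$ gives $z = Z(A)/n \neq 0$.

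Write $z = |z| e^{\pi i \phi_{0}}$. Applying the same computation to an arbitrary nonzero $\sigma$-semistable object shows that all of them have phase congruent to $\phi_{0}$ modulo $1$; hence, with $\psi$ the representative of $\phi_{0} \bmod 1$ in $(0,1]$, the heart $\mca A := \mca P((0,1])$ coincides with the single slice $\mca P(\psi)$. By local finiteness of $\sigma$, $\mca A$ is a finite-length abelian category, and since $K_{0}(\mca A) = K_{0}(\mb D^{b}(\Spec R)) \cong \bb Z$ is free of rank one, $\mca A$ has a unique simple object $T$, with $[T] = \pm[S]$.

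The crucial point is that $S$ is a shift of $T$. Let $a \le b$ be the smallest and largest degrees with $H^{i}_{\mca A}(S) \neq 0$ (cohomology with respect to the $t$-structure with heart $\mca A$), and suppose $a < b$. Lemma~\ref{lem:negative} (with $m_{1} = b$ and $m_{2} = a$) gives $\Hom(S, S[a-b]) \cong \Hom(H^{b}_{\mca A}(S), H^{a}_{\mca A}(S))$. The left-hand side is $\Ext^{a-b}_{R}(R/\mf m, R/\mf m) = 0$ since $a - b < 0$; the right-hand side is nonzero, because in the finite-length category $\mca A$ the nonzero object $H^{b}_{\mca A}(S)$ has a quotient isomorphic to $T$, the nonzero object $H^{a}_{\mca A}(S)$ has a subobject isomorphic to $T$, and the composite $H^{b}_{\mca A}(S) \twoheadrightarrow T \hookrightarrow H^{a}_{\mca A}(S)$ is nonzero. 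This contradiction shows $S$ is concentrated in a single $\mca A$-degree, say $S[d] \in \mca A$. Comparing classes in $\bb Z$, the equality $[S[d]] = \pm[S] = \pm\,(\text{length of } S[d])\cdot[T]$ forces $S[d]$ to have length one, so $S[d] \cong T$ is simple in $\mca P(\psi)$, hence $\sigma$-stable (and so is $S$), and $\mca A = \langle T \rangle = \mr{coh}(\Spec R)[d]$.

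Consequently $\sigma \mapsto (\phi_{\sigma}(S), |Z_{\sigma}(S)|)$ is a bijection $\Stab{\mb D^{b}(\Spec R)} \to \bb R \times \bb R_{>0}$: it is injective because $\phi_{\sigma}(S)$ determines the integer $d$, hence the heart $\mr{coh}(\Spec R)[d]$, and then $Z$; it is surjective because, given $(\phi, r)$, choosing $d$ with $\phi + d \in (0,1]$, declaring the heart to be $\mr{coh}(\Spec R)[d]$ and setting $Z(S) = r e^{\pi i \phi}$ defines a stability function whose Harder--Narasimhan property is automatic since the heart is of finite length (exactly as in the preceding lemma). Under this bijection the forgetful map $\sigma \mapsto Z_{\sigma}(S) \in \bb C \setminus \{0\}$ becomes the universal covering $(\phi, r) \mapsto r e^{\pi i \phi}$; as the former is a local homeomorphism by \cite{MR2373143} and the latter is open, the bijection and its inverse are continuous and in fact holomorphic, so $\Stab{\mb D^{b}(\Spec R)} \cong \bb C$. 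I expect the main obstacle to be the third step: forcing an arbitrary locally finite $\sigma$ to have the standard heart up to shift. This is exactly where $\dim R = 0$ enters — it produces the single simple object of $\mr{coh}(\Spec R)$ and the vanishing $\Ext^{<0}_{R}(R/\mf m, R/\mf m) = 0$ exploited through Lemma~\ref{lem:negative}.
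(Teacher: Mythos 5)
Your proof is correct, but the mechanism in the key step differs from the paper's. The paper classifies the $\sigma$-stable objects directly: a stable $A$ has invertible nonzero endomorphisms, hence satisfies (Ism), so Lemmas \ref{lem:keylem}--\ref{lem:zero} force its standard cohomologies to be $R/\mf m$-vector spaces; Lemma \ref{lem:negative} together with $\Hom(A,A[-m])=0$ then concentrates $A$ in one standard degree, and the division-ring property of $\End(A)$ gives $A\cong R/\mf m$ up to shift. The final identification with $\bb C$ is outsourced to \cite[Proposition 3.7]{morphismstability}. You instead never touch the support/(Ism) machinery: you use $K_0\cong\bb Z$ to collapse the heart to a single finite-length slice with a unique simple $T$, and then apply Lemma \ref{lem:negative} in the ``opposite direction'' --- to the known object $S=R/\mf m$ measured against the $t$-structure of $\sigma$ rather than to an unknown stable object measured against the standard $t$-structure --- using $\Ext^{<0}_R(S,S)=0$ on one side and the simple-quotient/simple-subobject trick on the other; the $K_0$-class comparison then pins down $S[d]\cong T$ and the heart. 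Your route buys a more self-contained argument (including an explicit parametrization by $\bb R\times\bb R_{>0}$ and the covering-space argument for the topology, where the paper only cites its predecessor), and it isolates exactly where $\dim R=0$ enters (unique simple in $\mr{coh}(\Spec R)$ plus vanishing of negative Exts); the paper's route has the advantage of producing the classification of stable objects itself, which is the statement reused in the analogous result for $\mb M_R$ (Proposition \ref{prop:classification}). The only places where you are terse are standard: that $\mca P(\psi)$ is of finite length under local finiteness is \cite[Lemma 5.2]{MR2373143}, and the continuity of $\sigma\mapsto\phi_\sigma(S)$ on the locus where $S$ is semistable is what makes your bijection continuous before you upgrade it to a local homeomorphism via the covering map; neither is a gap.
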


\begin{proof}
Let us denote by $\mf m$ the maximal ideal of $R$. 
If once we show that $R/\mf m$ is stable for all stability conditions on $\mb D^{b}(\Spec R)$, 
the same argument in \cite[Proposition 3.7]{morphismstability} implies the desired assertion. 

We claim that any stable object $A$ for a stability condition is a sheaf up to shifts. 
To show the claim set $m_{1}$ and $m_{2}$ by 
$m_{1}=\max\{i \in \bb Z \mid H^{i}(A) \neq 0\}$ and 
$m_{2}= \max \{	i	\in \bb Z \mid H^{i}(A) \neq 0\}$. 
It is enough to show that $m_{1}-m_{2}=0$. 

Next we have to show that $A$ is $R/ \mf m$ up to shifts. 
Lemma \ref{lem:zero} implies that the support of the stable object $A \in \mb D^{b}(\Spec R)$ is annihilated by the maximal ideal $\mf m$. 
Hence each cohomology of $A$ with respect to the standard $t$-structure is an $R/ \mf m$-module. 
Since $A$ is stable we have 
\[
\Hom(A, A[-m])=0. 
\]
for any positive integer $m$. 
Since the cohomologies of $A$ are vector spaces over the field $R/\mf m$, they are isomorphic to the direct sums of $R/\mf m$. 
Hence $m_{2}-m_{1}$ is zero by Lemma \ref{lem:negative}. 
Thus $A$ is isomorphic to the direct sum $(R/ \mf m)^{\oplus r}$ up to shifts. 
Since any non-zero endomorphism is invertible, 
$A$ is isomorphic to $R/\mf m$ up to shifts. 
\end{proof}

%

\begin{thm}\label{thm:artin}
Let $X$ be an affine Noetherian scheme with $\dim X=0$. 
Then $\Stab{\mb D^{b}(X)}$ is isomorphic to $\bb C^{n}$ where $n $ is the number of points in $X$. 
\end{thm}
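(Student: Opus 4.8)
The plan is to reduce the general zero-dimensional case to the local case already handled in Proposition~\ref{prop:zerolocal}. Since $X$ is an affine Noetherian scheme with $\dim X = 0$, the ring $R = H^0(X,\mathcal{O}_X)$ is a Noetherian ring of Krull dimension zero, hence Artinian. By the structure theorem for Artinian rings, $R$ decomposes as a finite product $R \cong R_1 \times \cdots \times R_n$ of Artinian local rings, where $n$ is precisely the number of points of $X = \Spec R$. Correspondingly, $X$ is the disjoint union $\coprod_{i=1}^n \Spec R_i$ of its (finitely many) points, each $\Spec R_i$ being a local Artinian scheme.

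First I would record the categorical consequence of this decomposition: the derived category splits as a product, $\mb D^b(X) \simeq \prod_{i=1}^n \mb D^b(\Spec R_i)$, since a coherent sheaf on a disjoint union is the same as a tuple of coherent sheaves on the components, and this equivalence is exact and compatible with the triangulated structure. Next I would invoke the general principle that the space of locally finite stability conditions on a product of triangulated categories is the product of the spaces of stability conditions on the factors: a central charge on $K_0\!\left(\prod_i \mb D^b(\Spec R_i)\right) = \bigoplus_i K_0(\mb D^b(\Spec R_i))$ is a tuple of central charges, and the slicing, the Harder--Narasimhan property, and local finiteness all decompose componentwise because $\Hom$ vanishes between objects supported on distinct components. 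This gives a homeomorphism (indeed an isomorphism of complex manifolds) $\Stab{\mb D^b(X)} \cong \prod_{i=1}^n \Stab{\mb D^b(\Spec R_i)}$.

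Finally, each factor $\Stab{\mb D^b(\Spec R_i)}$ is isomorphic to $\bb C$ by Proposition~\ref{prop:zerolocal}, since each $R_i$ is a zero-dimensional Noetherian local ring. Therefore $\Stab{\mb D^b(X)} \cong \bb C^n$, as claimed.

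The main obstacle I anticipate is making the product decomposition of $\Stab$ fully rigorous: one must check not only that central charges and slicings decompose, but that the induced topology on $\Stab{\mb D^b(X)}$ (defined via the generalized metric of \cite{MR2373143}) agrees with the product topology, and that the HN property for the glued data follows from the HN property on each factor. This is essentially a bookkeeping argument — the orthogonality $\Hom(\mb D^b(\Spec R_i), \mb D^b(\Spec R_j)) = 0$ for $i \neq j$ does all the work — but it is the step requiring care. Alternatively, one could phrase the splitting as an iterated \emph{orthogonal} semiorthogonal decomposition $\mb D^b(X) = \langle \mb D^b(\Spec R_1), \ldots, \mb D^b(\Spec R_n) \rangle$ and appeal to Proposition~\ref{CP2.2} to glue reasonable stability conditions, though one would still need to verify surjectivity of the gluing map to get the full product, so the direct argument via the product decomposition seems cleaner.
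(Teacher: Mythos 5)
Your proposal is correct and follows essentially the same route as the paper: decompose $R$ into a product of zero-dimensional Noetherian local rings, obtain the orthogonal decomposition $\mb D^{b}(X)=\bigoplus_{i}\mb D^{b}(\Spec R_{i})$, and apply Proposition~\ref{prop:zerolocal} to each factor. The only difference is that the paper handles the ``bookkeeping'' step you worry about by citing \cite[Proposition 5.2]{MR3984103} for the fact that $\Stab{\mb D^{b}(X)}$ is the product of the $\Stab{\mb D^{b}(\Spec R_{i})}$, rather than verifying the compatibility of slicings, HN filtrations, and the topology by hand.
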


\begin{proof}
Let $R$ be the coordinate ring $H^{0}(X, \mca O_{X})$. 
Recall that $R$ is the finite product of Noetherian local rings $\{R_{i} \}_{i=1}^{n}$ with $\dim R_{i}=0$.  
Then the derived category $\mb D^{b}(X)$ has the orthogonal decomposition 
\[
\mb D^{b}(X) = \bigoplus _{i=1}^{n} \mb D^{b}(\Spec R_{i}), 
\]
and the space $\Stab{\mb D^{b}(X)}$ is the finite product of $ \{ \Stab{ \mb D^{b}(\Spec R_{i})} \}_{i=1}^{n}$ by \cite[Proposition 5.2]{MR3984103}. 
Hence $\Stab {\mb D^{b}(\Spec R)}$ is isomorphic to $\bb C^{n}$ by Proposition \ref{prop:zerolocal}. 
\end{proof}

\begin{cor}\label{cor:charcterization}
Let $X$ be an affine Noetherian scheme. 
The space $\Stab{\mb D^{b}}(X)$ is not empty if and only if $\dim X=0$. 
\end{cor}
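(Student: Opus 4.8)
The plan is to read off the corollary directly from Theorem \ref{thm:nonexistance} and Theorem \ref{thm:artin}, which between them already settle both implications; there is no new content to supply, only bookkeeping.

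First I would handle the ``only if'' direction by contraposition. Suppose $\dim X > 0$. Then Theorem \ref{thm:nonexistance} gives $\Stab{\mb D^{b}(X)} = \emptyset$. Hence if $\Stab{\mb D^{b}(X)} \neq \emptyset$, then necessarily $\dim X = 0$.

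For the ``if'' direction, assume $\dim X = 0$. By Theorem \ref{thm:artin}, $\Stab{\mb D^{b}(X)}$ is isomorphic to $\bb C^{n}$, where $n$ is the (finite) number of points of $X$; recall that a zero-dimensional affine Noetherian scheme has Artinian coordinate ring and thus only finitely many points, so $n$ is well defined, and $n \geq 1$ since $X$ is a nonempty scheme. In particular $\Stab{\mb D^{b}(X)} \neq \emptyset$.

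The main (indeed only) obstacle here is not a mathematical one: everything of substance is already contained in the two cited theorems, and the proof amounts to combining them. The single point deserving a word of care is confirming that $n$ is a well-defined positive integer in the zero-dimensional case, which was implicitly used in the proof of Theorem \ref{thm:artin} when decomposing $R$ as a finite product of zero-dimensional local rings.
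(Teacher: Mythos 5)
Your proof is correct and follows exactly the route the paper takes: the corollary is obtained by combining Theorem \ref{thm:nonexistance} (emptiness when $\dim X>0$) with Theorem \ref{thm:artin} (the identification with $\bb C^{n}$ when $\dim X=0$). The extra remark that $n$ is a well-defined positive integer is a harmless bit of bookkeeping the paper leaves implicit.
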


\begin{proof}
The proof is clear from Theorems \ref{thm:nonexistance} and \ref{thm:artin}. 
\end{proof}

\section{Stability conditions on morphisms}\label{sc:morphism}

Basically we are interested in a relation between $\Stab{\mb D^{b}(X)}$ and $\Stab{\mb M_{X}}$. 
One of motivated problems is Problem \ref{problem1}. 
We first study the non-emptiness of $\Stab {\mb M_{X}}$ for an affine Noetherian scheme $X$. 
%

\begin{prop}\label{prop:morphisms}
Let $X$ be an affine Noetherian scheme. 
Then $\Stab{\mb M_{X}}$ is not empty if and only if $\dim X =0$. 
\end{prop}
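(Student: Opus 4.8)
The plan is to prove the two implications separately, closely following the treatment of $\mb D^{b}(X)$ in the previous section and replacing $\mr{coh}\,X$ by the abelian category $(\mr{coh}\,X)^{\Delta^{1}}$ throughout.

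First I would treat the case $\dim X=0$, so that $R=H^{0}(X,\mca O_{X})$ is Artinian. By Proposition \ref{prop:quiver} the category $\mb M_{X}$ carries the canonical bounded $t$-structure whose heart is $(\mr{coh}\,X)^{\Delta^{1}}$. Since every coherent sheaf on an affine Artinian scheme has finite length, $(\mr{coh}\,X)^{\Delta^{1}}$ is again a finite length abelian category; moreover, writing $R\cong\prod_{i=1}^{n}R_{i}$ with $R_{i}$ local Artinian of residue field $k_{i}$, its simple objects are exactly the $2n$ objects $[k_{i}\to 0]$ and $[0\to k_{i}]$. Hence $K_{0}(\mb M_{X})\cong K_{0}\big((\mr{coh}\,X)^{\Delta^{1}}\big)$ is free on these simples, and I would define $Z\colon K_{0}(\mb M_{X})\to\bb C$ by sending each of them to $-1$. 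Then $\big((\mr{coh}\,X)^{\Delta^{1}},Z\big)$ is a stability function all of whose nonzero objects are semistable, so it has the Harder--Narasimhan property and defines a stability condition by \cite[Proposition 5.3]{MR2373143}, which is moreover locally finite since its heart is Artinian and Noetherian. Thus $\Stab{\mb M_{X}}\neq\emptyset$. This is the analogue for $\mb M_{X}$ of the construction given above for $\mb D^{b}(\Spec R)$ with $R$ zero-dimensional local.

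For the converse, suppose $\dim X>0$ and, for a contradiction, that $\sigma\in\Stab{\mb M_{X}}$. The key step is an analogue of Lemma \ref{lem:zero}: \emph{if $A\in\mb M_{X}$ is nonzero and the multiplication $\mu_{r}\colon A\to A$ is zero or an isomorphism for every $r\in R$, then $\Supp d_{0}(A)\cup\Supp d_{1}(A)$ has dimension $0$}. To prove this I would use that the exact functors $d_{0},d_{1}\colon\mb M_{X}\to\mb D^{b}(X)$ are $R$-linear, so that they carry $\mu_{r}\colon A\to A$ to the multiplications $\mu_{r}$ on $d_{0}(A)$ and $d_{1}(A)$; hence each of the latter is again zero or an isomorphism, and passing to cohomology sheaves, every nonzero cohomology of $d_{0}(A)$ and of $d_{1}(A)$ satisfies condition (Ism) as an $R$-module. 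Lemma \ref{lem:keylem}(3) then shows that each such module has $0$-dimensional support, which gives the claim. Granting this, I would conclude as in the proof of Theorem \ref{thm:nonexistance}: the nonzero object $s(\mca O_{X})=[\1_{\mca O_{X}}\colon\mca O_{X}\to\mca O_{X}]$ admits, by local finiteness of $\sigma$, a finite filtration obtained from its Harder--Narasimhan filtration followed by Jordan--H\"older filtrations of the semistable factors, whose subquotients $A_{1},\dots,A_{m}$ are $\sigma$-stable. Each $A_{j}$ has only $0$ and isomorphisms among its endomorphisms, so in particular $\mu_{r}\colon A_{j}\to A_{j}$ is zero or an isomorphism for every $r$, and hence $\Supp d_{0}(A_{j})\cup\Supp d_{1}(A_{j})$ is $0$-dimensional; but applying $d_{0}$ and $d_{1}$ to the defining triangles of the filtration and using that support is subadditive along distinguished triangles in $\mb D^{b}(X)$ yields $X=\Supp\mca O_{X}\subseteq\bigcup_{j}\big(\Supp d_{0}(A_{j})\cup\Supp d_{1}(A_{j})\big)$, forcing $\dim X=0$ --- a contradiction.

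The main obstacle is the bracketed lemma, i.e. showing that a $\sigma$-stable object of $\mb M_{X}$ controls the supports of its two legs. What must be checked carefully is the compatibility of the $R$-linear structure with $d_{0}$ and $d_{1}$: once one knows that these functors convert $\mu_{r}\colon A\to A$ into the multiplications on $d_{0}(A)$ and $d_{1}(A)$, the dichotomy ``$\mu_{r}$ is zero or an isomorphism'' propagates, cohomology by cohomology, to genuine $R$-modules, where Lemmas \ref{lem:isom} and \ref{lem:keylem} finish the job. The remaining ingredients --- existence of a $\sigma$-stable object from local finiteness, the filtration of $s(\mca O_{X})$, and subadditivity of support --- are routine, and on the positive side the only nontrivial input is that $(\mr{coh}\,X)^{\Delta^{1}}$ is of finite length in the zero-dimensional case, which is exactly where Proposition \ref{prop:quiver} is used.
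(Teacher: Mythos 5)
Your argument is correct, and the direction $\dim X>0\Rightarrow\Stab{\mb M_{X}}=\emptyset$ is essentially the paper's own proof: take a $\sigma$-stable $f$, observe that the $R$-linear functors $d_{0},d_{1}$ carry the multiplication $\mu_{r}$ on $f$ to the multiplications on $d_{0}(f),d_{1}(f)$, so that condition (Ism) descends to the two legs, apply Lemma \ref{lem:zero} to conclude $\dim\Supp d_{i}(f)=0$, and derive a contradiction by filtering (the image under $d_{0}$ of) $s(\mca O_{X})$ by stable factors exactly as in Theorem \ref{thm:nonexistance}. Where you genuinely diverge is the converse direction: the paper simply combines Theorem \ref{thm:artin} with the earlier result (\cite[Theorem 1.2]{morphismstability}) that $d_{i}^{*}$ embeds $\Stab{\mb D^{b}(X)}$ into $\Stab{\mb M_{X}}$, whereas you build a stability condition on $\mb M_{X}$ from scratch on the canonical heart $(\mr{coh}\,X)^{\Delta^{1}}$ of Proposition \ref{prop:quiver}, using that this heart is of finite length with the $2n$ simples $[k_{i}\to 0]$ and $[0\to k_{i}]$ and taking $Z=-(\text{length})$. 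Your route is more self-contained (it avoids importing the closed-embedding theorem from the companion paper) at the cost of having to identify the simples of the morphism category; the paper's route is shorter but leans on external machinery. Both are valid, and your explicit construction has the side benefit of exhibiting a concrete point of $\Stab{\mb M_{X}}$ on the canonical heart.
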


\begin{proof}

Assume $\dim X=0$. 
Then $\Stab{\mb D^{b}(X)}$ is not empty by Proposition \ref{thm:artin}. 
So \cite[Theorem 1.2]{morphismstability} implies that $\Stab{\mb M_{X}}$ is not empty. 

Assume that $\dim X$ is positive. 
Take $f\in \mb M_{X}$ which is stable with respect to a stability condition on $\mb M_{X}$. 
Note that there is a morphism of algebras via term-wise multiplication: 
\[
\mu  \colon \Hom_{\mb D^{b}(X)}(\mca O_{X}, \mca O_{X}) \to \Hom_{\mb M_{X}}(f, f). 
\] 
In particular $d_{i} \mu \colon d_{i}f \to d_{i}f$ is also the multiplication. 
Since $f$ satisfies the condition (Ism), so does $d_{i}f  \in \mb D^{b}(X)$ ($i \in \{0, 1\}$). 
Lemma \ref{lem:zero} implies that the objects $d_{i}f$ supported in closed points of $X$. 
This gives a contradiction by the same reason in Theorem \ref{thm:nonexistance}. 
\end{proof}

\begin{cor}\label{cor:matome}
Let $X$ be an affine Noetherian scheme. 
The following are equivalent. 
\begin{enumerate}
\item The dimension of $X$ is zero, 
\item $\Stab{\mb D^{b}(X)}$ is non-empty, and 
\item $\Stab{\mb M_{X}}$ is non-empty. 
\end{enumerate}
\end{cor}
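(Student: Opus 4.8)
The plan is to deduce the statement directly by assembling the results already established in this section and the previous one. Although the three conditions are presented as a list, in fact both (2) and (3) are individually equivalent to (1), so it suffices to record those two equivalences and then chain them.

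First I would invoke Corollary \ref{cor:charcterization}, which states precisely that $\Stab{\mb D^{b}(X)}$ is non-empty if and only if $\dim X=0$; this is the equivalence $(1)\Leftrightarrow(2)$. That corollary itself rests on Theorem \ref{thm:nonexistance} (emptiness when $\dim X>0$, obtained from the isomorphic property of a $\sigma$-stable object together with Lemma \ref{lem:zero} and the fact that $\mca O_{X}$ is built from finitely many $\sigma$-stable pieces) and on Theorem \ref{thm:artin} (the description $\Stab{\mb D^{b}(X)}\cong\bb C^{n}$ when $\dim X=0$, via the Artinian heart $\mr{coh}(X)$).

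Next I would invoke Proposition \ref{prop:morphisms}, which gives $(1)\Leftrightarrow(3)$: when $\dim X=0$ the non-emptiness of $\Stab{\mb M_{X}}$ follows from \cite[Theorem 1.2]{morphismstability} applied to the non-empty $\Stab{\mb D^{b}(X)}$, whereas when $\dim X>0$ a stable object $f\in\mb M_{X}$ would force, through the term-wise multiplication morphism $\mu\colon\Hom_{\mb D^{b}(X)}(\mca O_{X},\mca O_{X})\to\Hom_{\mb M_{X}}(f,f)$ and the isomorphic property of $d_{i}f$, that each $d_{i}f$ be supported at closed points, contradicting $\dim X>0$. Chaining the two equivalences yields $(1)\Leftrightarrow(2)\Leftrightarrow(3)$, which completes the proof.

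There is essentially no obstacle left at this stage: all the substantive work is contained in the cited statements, and the corollary is a bookkeeping consequence. If one wishes to isolate the conceptual core, it is the observation that $\sigma$-stability forces the isomorphic property (Ism), which in positive dimension is incompatible with the existence of enough $\sigma$-stable objects to reconstruct $\mca O_{X}$ --- or, in the morphism category, to reconstruct $d_{i}f$.
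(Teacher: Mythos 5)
Your proof is correct and follows exactly the paper's own route: the paper also deduces the corollary by combining Corollary \ref{cor:charcterization} (the equivalence $(1)\Leftrightarrow(2)$) with Proposition \ref{prop:morphisms} (the equivalence $(1)\Leftrightarrow(3)$). The additional commentary you give on how those cited results are proved is accurate but not needed for the corollary itself.
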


\begin{proof}
The proof is clear from Corollary \ref{cor:charcterization} and Proposition \ref{prop:morphisms}. 
\end{proof}

\begin{rmk}
If $\dim X$ is positive, then both $\Stab{\mb D^{b}(X)}$ and $\Stab{\mb M_{X}}$ are empty. 
In particular they are homotopy equivalent to each other. 
\end{rmk}

Now we further study $\Stab{\mb M_{X}}$ when $X$ is the affine scheme of a zero-dimensional Netherian local ring $R$. 
To simplify notation, we introduce the following: 

\begin{dfn}
Let $R$ be a zero-dimensional Noetherian local ring with maximal ideal $\mf m$ and let $\mb k$ be the residue field $R/ \mf m$. 
The categories of morphisms in $\mb D^{b}(\Spec \mb k)$ is denoted by 
$\mb M_{0}$. 
We denote by $\mca A_{R}$ (resp. $\mca A_{0}$) the heart of the standard $t$-structure $(\mb M_{R}^{\leq 0}, \mb M_{R}^{\geq 1})$ on $\mb M_{R}$ (resp. $(\mb M_{0}^{\leq 0}, \mb M_{0}^{\geq 1})$ on $\mb M_{0}$). 
\end{dfn}

The main theorem of this section is Theorem \ref{thm:main2} below. 
Let $i \colon \Spec \mb k \to \Spec R$ be the closed embedding. 
The exact functor $i_{*} \colon \mr{coh}(\Spec \mb k) \to \mr{coh}(\Spec R) $ induces a functor 
\[
\mca A_{0} \to \mca A_{R}
\]
which is also exact. 
Thus we obtain the functor
\[
 \mb M _{0}\to \mb M_{R}. 
\]
By abusing notation, we denote by $i_{*}$ these induced functors.

\begin{lem}\label{lem:ext1}
The functor $i_{*} \colon \mb M_{0} \to \mb M_{R}$ is faithful. 
\end{lem}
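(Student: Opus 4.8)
The plan is to reduce the faithfulness of $i_{*}$ to a short list of $\Ext$-computations inside the hearts $\mca A_{0}\subset\mca A_{R}$, exploiting that $\mca A_{0}$ is hereditary. By Proposition~\ref{prop:quiver} we may identify $\mb M_{0}$ with $\mb D^{b}(\mca A_{0})$ and $\mb M_{R}$ with $\mb D^{b}(\mca A_{R})$, and by Remark~\ref{rmk:quiver} the heart $\mca A_{0}$ is the category of finite dimensional representations of the $A_{2}$ quiver, whose path algebra has global dimension one; hence $\Ext^{n}_{\mca A_{0}}=0$ for $n\geq 2$, and by the standard decomposition theorem for the bounded derived category of a hereditary abelian category every object $X$ of $\mb M_{0}$ is isomorphic (non-canonically) to $\bigoplus_{j}H^{j}(X)[-j]$. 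First I would fix such isomorphisms for two objects $X,Y$ and apply the \emph{exact} functor $i_{*}$ to them; since $i_{*}$ commutes with the cohomology functors and with shifts, this identifies $i_{*}\colon\Hom_{\mb M_{0}}(X,Y)\to\Hom_{\mb M_{R}}(i_{*}X,i_{*}Y)$ with the direct sum over $j,k$ of the maps
\[
i_{*}\colon\Ext^{j-k}_{\mca A_{0}}\!\left(H^{j}(X),H^{k}(Y)\right)\longrightarrow\Ext^{j-k}_{\mca A_{R}}\!\left(i_{*}H^{j}(X),i_{*}H^{k}(Y)\right).
\]
Thus it suffices to prove that for all $a,b\in\mca A_{0}$ and all $n\in\bb Z$ the induced map $\Ext^{n}_{\mca A_{0}}(a,b)\to\Ext^{n}_{\mca A_{R}}(i_{*}a,i_{*}b)$ is injective.

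For $n\notin\{0,1\}$ the source vanishes ($n\geq 2$ by heredity, $n<0$ trivially), so there is nothing to check. For $n=0$ the key observation is that $i_{*}a$ and $i_{*}b$ are objects of $\mca A_{R}$ all of whose terms are $R$-modules annihilated by $\mf m$, and an $R$-linear map between $R/\mf m$-modules is nothing but a $\mb k$-linear map; hence $\Hom_{\mca A_{R}}(i_{*}a,i_{*}b)=\Hom_{\mca A_{0}}(a,b)$ and $i_{*}$ is bijective in degree zero. For $n=1$, suppose the class of a short exact sequence $0\to b\to e\to a\to 0$ in $\mca A_{0}$ maps to zero, i.e. the pushed-forward sequence $0\to i_{*}b\to i_{*}e\to i_{*}a\to 0$ splits; a splitting is a section $i_{*}a\to i_{*}e$ in $\mca A_{R}$, but $e$ again lies in $\mca A_{0}$, so all terms of $i_{*}e$ are annihilated by $\mf m$, and by the degree-zero case the section is the image of a morphism $a\to e$ in $\mca A_{0}$ which, by faithfulness of $i_{*}$ on $\mca A_{0}$, is again a section. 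Therefore the original sequence splits in $\mca A_{0}$, i.e. $i_{*}$ is injective on $\Ext^{1}$, and the lemma follows.

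The point that requires care — and the reason I would not attempt a direct dévissage along the canonical $t$-structure of $\mb M_{X}$ — is that $i_{*}$ is \emph{not} full: already $i_{*}\colon\mb D^{b}(\Spec\mb k)\to\mb D^{b}(\Spec R)$ fails to be full as soon as $\mf m\neq\mf m^{2}$, since $\Ext^{1}_{R}(\mb k,\mb k)\neq 0$ while $\Hom_{\mb D^{b}(\Spec\mb k)}(\mb k,\mb k[1])=0$. Consequently a four-lemma argument over the truncation triangles of $X$ would need a surjectivity statement that is simply false. Invoking formality of the objects of $\mb M_{0}$ is exactly what sidesteps this, leaving only the three elementary computations above; the remaining ingredients (exactness of $i_{*}$ on the abelian categories, its compatibility with $H^{j}$ and with shifts, and the $R/\mf m$-linearity remark) are routine.
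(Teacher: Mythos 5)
Your proof is correct and follows essentially the same route as the paper's: reduce via formality of $\mb D^{b}(\mca A_{0})$ (hereditary heart) to objects of $\mca A_{0}$, observe the source vanishes for $n\notin\{0,1\}$, note $i_{*}$ is bijective on $\Hom$, and check injectivity on $\Ext^{1}$. Your treatment of the $\Ext^{1}$ step (descending the splitting via fullness on the hearts) is a more explicit version of the paper's one-line justification, but it is the same argument.
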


\begin{proof}
It is enough to show that the natural morphism 
\begin{equation}\label{eq:ext}
i_{*}^{p} \colon \Hom_{\mb M_{0}} (f, g[p])  \to \Hom_{\mb M_{R}}(i_{*}f, i_{*}g[p])
\end{equation}
is injective for any $f$ and $g \in \mb M_{0}$, and  for any integer $p$. 
Recall that any object in $\mb M_{0}$ is the finite direct sum of shifts of objects in $\mca A_{0}$. 
Hence we can assume both $f$ and $g$ are in $\mca A_{0}$ without loss of generality. 

Since the global dimension of $\mca A_{0}$ is $1$, the left hand side in (\ref{eq:ext}) vanishes for $p \not\in \{0,1\}$. 
Now the claim for $p=0$ is obvious since $i_{*}^{0}$ is an isomorphism. 
In addition $i_{*}^{1}$ is injective since $i_{*} \colon \mca A_{0} \to \mca A_{R}$ is exact and commutes with direct sums. 
\end{proof}

\begin{lem}\label{lem:higherext}
Let $f$ and $g$ be in $\mca A_{0}$. 
If $\Hom_{\mb M_{R}}(i_{*}f, i_{*}g[p])$ is zero for $p \in \{ 0,1\}$, then 
\[
\Hom_{\mb M_{R}}(i_{*}f, i_{*}g[p])=0
\] holds for any $p \in \bb Z$.  
\end{lem}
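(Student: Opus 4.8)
The plan is to reduce to indecomposables and then dispose of finitely many cases. Since $i_{*}$ is additive and $\Hom$-groups in $\mb M_{R}$ decompose over finite direct sums, both the hypothesis and the conclusion are equivalent to the analogous statements for the pairs obtained by splitting $f$, and then $g$, into indecomposable summands in $\mca A_{0}$; so I may assume $f$ and $g$ are indecomposable. By Remark \ref{rmk:quiver}, $\mca A_{0}$ is the category of finite dimensional representations of the $A_{2}$ quiver, so $f$ and $g$ lie in $\{S_{1},S_{2},P\}$, where $S_{1}=[\mb k\to 0]$, $S_{2}=[0\to\mb k]$ and $P=[\mb k\xrightarrow{\1}\mb k]$. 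Writing $\mb k$ also for $R/\mf m$ regarded as an $R$-module, the construction of $i_{*}$ identifies $i_{*}S_{1}=j_{!}(\mb k)$, $i_{*}S_{2}=j_{*}(\mb k)$ and $i_{*}P=s(\mb k)$, so the whole statement reduces to understanding, in all degrees, the nine $\Hom$-groups $\Hom_{\mb M_{R}}(-,-[p])$ among $j_{!}(\mb k)$, $j_{*}(\mb k)$ and $s(\mb k)$ in $\mb M_{R}$.

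The key input is a cyclic vanishing coming straight from Lemma \ref{lem:SOD}. Since the subcategories $(\mb M_{R})_{s}$, $(\mb M_{R})_{0/}$ and $(\mb M_{R})_{/0}$ are triangulated, hence shift-invariant, the semiorthogonal decompositions $(\ref{sd_{0}})$, $(\ref{d_{1}s})$ and $(\ref{nokori})$ give, for \emph{every} $p\in\bb Z$,
\[
\Hom_{\mb M_{R}}(j_{!}(\mb k),s(\mb k)[p])=\Hom_{\mb M_{R}}(s(\mb k),j_{*}(\mb k)[p])=\Hom_{\mb M_{R}}(j_{*}(\mb k),j_{!}(\mb k)[p])=0.
\]
Hence the conclusion of the lemma holds unconditionally whenever $(i_{*}f,i_{*}g)$ is one of the three pairs $(j_{!}(\mb k),s(\mb k))$, $(s(\mb k),j_{*}(\mb k))$, $(j_{*}(\mb k),j_{!}(\mb k))$. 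For each of the remaining six pairs I will instead check that the hypothesis is already violated, i.e. $\Hom_{\mb M_{R}}(i_{*}f,i_{*}g[p])\neq 0$ for some $p\in\{0,1\}$: if $f=g$ this is the identity of the nonzero object $i_{*}f$; the pairs $(P,S_{1})$ and $(S_{2},P)$ carry the obvious non-zero morphisms $s(\mb k)\to j_{!}(\mb k)$ and $j_{*}(\mb k)\to s(\mb k)$ in $\mca A_{R}$; and for $(S_{1},S_{2})$ the distinguished triangle $j_{*}(\mb k)\to s(\mb k)\to j_{!}(\mb k)\to j_{*}(\mb k)[1]$ attached to $s(\mb k)$ by $(\ref{nokori})$ does not split, since $s(\mb k)$ is indecomposable in $\mca A_{R}$ whereas $j_{*}(\mb k)\oplus j_{!}(\mb k)=[\mb k\xrightarrow{0}\mb k]$ is not, so its class is a non-zero element of $\Hom_{\mb M_{R}}(j_{!}(\mb k),j_{*}(\mb k)[1])$.

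Putting the two cases together finishes the proof: for each of the nine pairs, either the conclusion holds outright (the three ``cyclically orthogonal'' pairs) or the hypothesis is vacuous (the other six). The only non-formal point is the non-vanishing of $\Hom_{\mb M_{R}}(j_{!}(\mb k),j_{*}(\mb k)[1])$, and even this amounts only to observing that the canonical triangle relating $j_{!}$, $s$ and $j_{*}$ is non-split; I expect the main hazard to be purely bookkeeping — matching each of $S_{1},S_{2},P$ with the right functor among $j_{!},j_{*},s$, and each semiorthogonal decomposition of Lemma \ref{lem:SOD} with the right vanishing relation.
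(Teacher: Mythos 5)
Your proof is correct and lands on the same three ``surviving'' pairs and the same vanishing statements as the paper, but it gets there by a different mechanism. The paper first uses the faithfulness of $i_{*}$ (Lemma \ref{lem:ext1}) to push the hypothesis down to $\mb M_{0}$, where the known Ext-quiver of $A_{2}$ forces $(i_{*}f,i_{*}g)$ to be one of $(j_{*}(\mb k),j_{!}(\mb k))$, $(s(\mb k),j_{*}(\mb k))$, $(j_{!}(\mb k),s(\mb k))$, and then kills all Homs for these pairs by the adjunctions $j_{*}\dashv d_{0}$, $s\dashv d_{1}$, $d_{0}\dashv s$. You never leave $\mb M_{R}$: you eliminate the other six pairs by exhibiting a nonzero Hom in degree $0$ or $1$ (identities, the two evident maps in the heart $\mca A_{R}$, and the non-split connecting map of $j_{*}(\mb k)\to s(\mb k)\to j_{!}(\mb k)\to j_{*}(\mb k)[1]$, which is indeed non-split since $s(\mb k)$ is indecomposable), and you get the all-degree vanishing for the three remaining pairs from the shift-invariance of the components of the semiorthogonal decompositions of Lemma \ref{lem:SOD} --- which is exactly the content of the paper's three adjunction computations, repackaged. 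Your route is self-contained (it does not invoke Lemma \ref{lem:ext1}) and proves slightly more, namely that the hypothesis holds \emph{precisely} for the three semiorthogonal pairs; the paper's route is shorter because the elimination of the bad pairs is delegated to the representation theory of $A_{2}$ via faithfulness. All the individual verifications you make (the identification of $i_{*}$ on the indecomposables, the non-splitness argument) are sound.
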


\begin{proof}
Recall that any object in $\mca A_{0}$ is the direct sum of indecomposable objects in $\mca A_{0}$ and any indecomposable object in $\mca A_{0}$ is one of the following: 
\begin{equation*}
s(\mb k)=[\1 \colon \mb k \to \mb k], 
j_{!}(\mb k) = [\mb k \to 0], \text{or }
j_{*}(\mb k) =[0 \to \mb k]. 
\end{equation*}
It is enough to prove the claim when $f$ and $g$ are indecomposable.  

Lemma \ref{lem:ext1} implies $\Hom_{\mb M_{0}}(f, g[p])=0$ for $p \in \{0,1\}$. 
Then the pair $\left(i_{*}(f),i_{*}(g)\right)$ has to be one of the following:
\[
\left(i_{*}(f),i_{*}(g)\right)
=\left(j_{*}(\mb k), j_{!}(\mb k)\right), 
\left(s(\mb k), j_{*}(\mb k)\right), \text{or }
\left(j_{!}(\mb k), s(\mb k)\right). 
\]

Suppose $(i_{*}f, i_{*}g) = \left(j_{*}(\mb k), j_{!}(\mb k)\right)$. 
Recall that $j_{*}$ is the left adjoint of $d_{0}$. 
Hence we see 
\[
\Hom_{\mb M_{R}}(j_{*}(\mb k), j_{!}(\mb k)[p]) \cong 
\Hom_{\mb D^{b}(\Spec R)}(\mb k, d_{0} \circ j_{!}(\mb k)[p]) 
=
\Hom_{\mb D^{b}(\Spec R)}(\mb k, 0)=0. 
\]
Similarly one can prove the claim for $\left(i_{*}(f),i_{*}(g)\right)=\left(s(\mb k), j_{*}(\mb k)\right)$ by using the adjunction $s \dashv d_{1}$. 

Finally suppose that $\left(i_{*}(f),i_{*}(g)\right)=\left(j_{!}(\mb k), s(\mb k)\right)$. 
The adjunction $d_{0}\dashv s$ implies 
\[
\Hom_{\mb M_{R}}(j_{!}(\mb k), s(\mb k)[p]) 
\cong 
\Hom_{\mb D^{b}(\Spec R)}(d_{0} \circ j_{!}(\mb k), \mb k[p]) 
=
\Hom_{\mb D^{b}(\Spec R)}(0, \mb k[p]) =0. 
\]
We have finished the proof. 
\end{proof}

\begin{prop}\label{prop:classification}
Let $\sigma$ be a locally finite stability condition on $\mb M_{R}$. 
If $f \in \mb M_{R}$ is $\sigma$-stable, then $f$ is, up to shifts, one of the following: 
\begin{equation}\label{eq:indecomposable}
s(\mb k)=[\1 \colon \mb k \to \mb k], \ 
j_{!}(\mb k)=[\mb k\to 0], \text{ and }
j_{*}(\mb k)=[0 \to \mb k]. 
\end{equation}
\end{prop}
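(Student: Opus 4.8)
The plan is to imitate the proof of Proposition \ref{prop:zerolocal}, transported through the semiorthogonal decompositions of Lemma \ref{lem:SOD}. Since $f$ is $\sigma$-stable, $\mathrm{End}_{\mb M_{R}}(f)$ is a division ring, so every nonzero endomorphism of $f$ is an isomorphism; composing with the term-wise multiplication homomorphism $\mu\colon R\to\mathrm{End}_{\mb M_{R}}(f)$ shows that $f$ satisfies (Ism). Consider $\mb M_{R}=\<(\mb M_{R})_{/0},(\mb M_{R})_{0/}\>$ of \eqref{nokori} and the triangle $j_{*}d_{0}(f)\to f\to j_{!}d_{1}(f)\to j_{*}d_{0}(f)[1]$ from the proof of Lemma \ref{lem:SOD}: if $d_{1}(f)=0$ then $f\cong j_{*}d_{0}(f)\in(\mb M_{R})_{0/}$, and if $d_{0}(f)=0$ then $f\cong j_{!}d_{1}(f)\in(\mb M_{R})_{/0}$. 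So it suffices to prove: \textbf{(A)} every $\sigma$-stable object lying in $(\mb M_{R})_{/0}$, $(\mb M_{R})_{0/}$ or $(\mb M_{R})_{s}$ is, up to shift, $j_{!}(\mb k)$, $j_{*}(\mb k)$ or $s(\mb k)$ respectively; and \textbf{(B)} if $f$ is $\sigma$-stable with $d_{0}(f)\neq 0\neq d_{1}(f)$ then $f\in(\mb M_{R})_{s}$.

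For \textbf{(A)}, write $g\cong j_{!}(E)$ with $E\in\mb D^{b}(\Spec R)$ (the other two cases are identical under the equivalences \eqref{eq:identification}). Since $j_{!}$ is fully faithful and carries the multiplication by $r$ on $E$ to that on $j_{!}(E)$, the object $E$ satisfies (Ism), $\mathrm{End}_{\mb D^{b}(\Spec R)}(E)=\mathrm{End}_{\mb M_{R}}(g)$ is a division ring, and $\Hom_{\mb D^{b}(\Spec R)}(E,E[n])=\Hom_{\mb M_{R}}(g,g[n])=0$ for $n<0$ (a $\sigma$-stable object has no negative self-extensions). By Lemma \ref{lem:keylem} every cohomology sheaf of $E$ is annihilated by $\mf m$, hence a finite-dimensional $\mb k$-vector space; and if $E$ had nonzero cohomology in more than one degree, taking the extreme degrees $m_{2}<m_{1}$, Lemma \ref{lem:negative} would give $0=\Hom_{\mb D^{b}(\Spec R)}(E,E[m_{2}-m_{1}])\cong\Hom_{\mb k}\bigl(H^{m_{1}}(E),H^{m_{2}}(E)\bigr)$, impossible since both sides of the latter Hom are nonzero vector spaces. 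So $E$ is a $\mb k$-vector space concentrated in a single degree, and its endomorphism ring being a division ring forces $E\cong\mb k[-m]$, i.e.\ $g\cong j_{!}(\mb k)[-m]$. (This is exactly the argument of Proposition \ref{prop:zerolocal}.) As $j_{!}$, $j_{*}$, $s$ are compatible with $i_{*}$, claim \textbf{(A)} identifies the relevant stable objects with those of \eqref{eq:indecomposable} up to shift.

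It remains to prove \textbf{(B)}, equivalently that the structure morphism $\phi_{f}\colon d_{1}(f)\to d_{0}(f)$ is an isomorphism, i.e.\ $\cof\phi_{f}=0$. First $\phi_{f}\neq 0$, for otherwise $f\cong j_{!}d_{1}(f)\oplus j_{*}d_{0}(f)$ would be decomposable. The decompositions $\<(\mb M_{R})_{s},(\mb M_{R})_{/0}\>$ and $\<(\mb M_{R})_{0/},(\mb M_{R})_{s}\>$ of Lemma \ref{lem:SOD} furnish triangles $s(d_{1}f)\to f\to j_{*}(\cof\phi_{f})\to s(d_{1}f)[1]$ and $j_{!}(\fib\phi_{f})\to f\to s(d_{0}f)\to j_{!}(\fib\phi_{f})[1]$, in which the canonical morphisms $s(d_{1}f)\to f$ and $f\to s(d_{0}f)$ are nonzero, and, should $\cof\phi_{f}\neq 0$, so are $f\to j_{*}(\cof\phi_{f})$ and $j_{!}(\fib\phi_{f})\to f$ (else $f$ would split off $j_{*}(\cof\phi_{f})[-1]$ or $j_{!}(\fib\phi_{f})[1]$). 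One can moreover use that $d_{0}(f),d_{1}(f)$ satisfy (Ism), so by Lemma \ref{lem:keylem} and the $t$-exactness of $d_{0},d_{1}$ for the canonical $t$-structure of Proposition \ref{prop:quiver}, each cohomology object $H^{j}(f)$ of $f$ for that $t$-structure is a morphism of $\mb k$-vector spaces, hence a direct sum of copies of $s(\mb k),j_{!}(\mb k),j_{*}(\mb k)$. Feeding the nonzero morphisms above into the Harder--Narasimhan property of $\sigma$, controlling the intervening $\Hom$-groups with Lemmas \ref{lem:ext1} and \ref{lem:higherext} and with claim \textbf{(A)} applied to the stable constituents that arise, one should force $\cof\phi_{f}=0$.

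I expect \textbf{(B)} to be the essential difficulty. When $R=\mb k$ it reduces to the elementary classification of indecomposable representations of the $A_{2}$ quiver, but for a general zero-dimensional $R$ the heart $(\mr{coh}\,X)^{\Delta^{1}}$ of $\mb M_{R}$ is not hereditary and carries nonsplit extensions between $i_{*}$-images, and $\mb M_{R}$ can even have nonzero $\Hom\bigl(s(\mb k),s(\mb k)[p]\bigr)$ for $p\geq 2$; hence one cannot split $f$ along its structure morphism for free, and the argument must genuinely exploit $\sigma$-stability through the semiorthogonal decompositions.
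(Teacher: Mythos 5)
Your reduction and your claim \textbf{(A)} are fine: the transport of the argument of Proposition \ref{prop:zerolocal} through the equivalences (\ref{eq:identification}) correctly classifies the stable objects lying in a single component of a semiorthogonal decomposition, and your use of Lemma \ref{lem:keylem} and Lemma \ref{lem:negative} there is exactly what is needed. But the proof is not complete, because \textbf{(B)} --- which, as you yourself say, is the essential difficulty --- is never actually proved. ``Feeding the nonzero morphisms into the Harder--Narasimhan property \dots one should force $\cof\phi_{f}=0$'' is a declaration of intent, not an argument: nothing you have written rules out a $\sigma$-stable $f$ whose cohomologies with respect to the canonical $t$-structure sit in several degrees, glued by nonsplit extensions coming from the higher $\Ext$'s of $R$ that you correctly observe are present (e.g.\ for $R=\mb k[\epsilon]/\epsilon^{2}$). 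Your two auxiliary triangles only tell you that certain canonical maps are nonzero when $\cof\phi_{f}\neq 0$; they do not produce a destabilizing subobject or a forbidden self-$\Hom$.

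The step you are missing is carried out in the paper as follows, and none of its ingredients appear in your sketch. One normalizes $f$ so that $H^{j}(f)=0$ for $j<0$ and $H^{0}(f)\neq 0$, sets $\ell=\max\{j:H^{j}(f)\neq 0\}$, and shows $\ell=0$ using the spectral sequence $E_{2}^{p,q}=\bigoplus_{i}\Hom_{\mb M_{R}}(H^{i}(f),H^{i+q}(f)[p])\Rightarrow\Hom_{\mb M_{R}}(f,f[p+q])$ together with the vanishing $\Hom(f,f[n])=0$ for $n<0$. For $\ell=1$ one first shows $\Hom(H^{1}(f),H^{0}(f)[1])=0$ by exhibiting, from any nonzero class, a nonzero \emph{nilpotent} endomorphism of $f$ (factoring through the truncation maps), contradicting stability; then $E_{2}^{0,-1}\cong E_{\infty}^{0,-1}=0$ gives $\Hom(H^{1}(f),H^{0}(f))=0$, and Lemma \ref{lem:higherext} upgrades the vanishing in degrees $0$ and $1$ to all degrees, splitting $f$. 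For $\ell\geq 2$ an induction on $q$ using the same degeneration and Lemma \ref{lem:higherext} kills all $E_{2}^{p,-\ell}$ and then all intermediate cohomologies, again splitting $f$. This is precisely the mechanism that defeats the non-hereditary behaviour of $\mca A_{R}$ you worry about: Lemma \ref{lem:higherext} applies because the cohomologies land in $\mca A_{0}$ (by your own (Ism) argument), where vanishing of $\Hom$ and $\Ext^{1}$ between indecomposables forces vanishing of all higher $\Hom$'s by adjunction. Once $f$ is concentrated in a single degree it lies in $\mca A_{0}$ up to shift and is indecomposable, so it is one of the three objects in (\ref{eq:indecomposable}); your case split into \textbf{(A)} and \textbf{(B)} is then unnecessary. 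As it stands, your proposal proves the proposition only for stable objects already known to lie in one of the three components, and leaves the general case open.
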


\begin{proof}
By the argument in Proposition \ref{prop:morphisms}, each $i$-th cohomology $H^{i}(f)$ of $f$ with respect to the $t$-structure $(\mb M_{R}^{\leq 0}, \mb M_{R}^{\geq 1})$ is in $\mca A_{0}$. 
It is enough to show that $f$ is in $\mca A_{0}$ up to shifts. 
In fact, if $f$ is in $\mca A_{0}$ and stable then $f$ is indecomposable by \cite[Lemma 3.3]{morphismstability}. 
Since there are only $3$ indecomposable objects in $\mca A_{0}$ listed in (\ref{eq:indecomposable}), we have the desired assertion. 

Without loss of generality, assume that $f$ satisfies 
\[
0 =\min \{ j \in \bb Z \mid H^{j}(f) \neq 0 \}. 
\]
Put $\ell = \max \{ j \in \bb Z \mid H^{j}(f)\neq 0 \}$ and it is enough to show that $\ell =0$.

Assume $\ell =1$. 
Now we claim $\Hom_{\mb M_{R}}(H^{1}(f), H^{0}(f)[1])=0$.  
Recall that there is a distinguished triangle 
\[
\xymatrix{
H^{0}(f) \ar[r]^-{\tau_{0}}	&	f	\ar[r]^-{\tau_{1}}	&	H^{1}(f)[-1]	\ar[r]	&	H^{0}(f) [1]
}
\]
by the truncation for the canonical $t$-structure. 
If $\Hom_{\mb M_{R}}(H^{1}(f), H^{0}(f)[1])\neq 0$, then there exists a non-zero morphism 
$\varphi \colon H^{1}(f)[-1] \to H^{0}(f)$. 
Then the composite $\tilde \varphi= \tau_{0} \circ \varphi \circ \tau_{1}$ is a non-zero endomorphism of $f$ since $f$ is in $\mb M_{R}^{\leq 1} \cap \mb M_{R}^{\geq 0}$. 
Then the $\sigma$-stability of $f$ implies that $\tilde \varphi$ is an isomorphism. 
On the other hand $\tilde \varphi \circ \tilde \varphi$ is zero by $\Hom_{\mb M_{R}} (H^{0}(f), H^{1}(f)[-1])=0$. 
Thus $\Hom_{\mb M_{R}}(H^{1}(f), H^{0}(f)[1])$ has to be zero if $\ell=1$. 

Recall the spectral sequence given by 
\begin{equation}\label{eq:spectral}
E_{2}^{p,q}=
\bigoplus _{i \in \mb Z} \Hom_{\mb M_{R}}(H^{i}(f), H^{i+q}(f)[p]) \Rightarrow 
\Hom_{\mb M_{R}}(f,f[p+q])=E^{p+q}. 
\end{equation}
Since we are assuming $\ell =1$, we see $E^{0,-1}_{2} \cong E^{0, -1}_{\infty}$.  
By the $\sigma$-stability of $f$, the vanishing $\Hom_{\mb M_{R}}(f, f[-1]) =0$ implies 
that $\Hom_{\mb M_{R}}(H^{1}(f), H^{0}(f))$ is zero. 
Then Lemma \ref{lem:higherext} implies the vanishings 
\[
\Hom_{\mb M_{R}}(H^{1}(f), H^{0}(f)[p])=0 \quad (\forall p \in \bb Z). 
\]
Thus $f$ has to be split. 
Since $f$ is indecomposable by \cite[Lemma 3.3]{morphismstability}, this gives a contradiction. 
Hence we see $\ell \neq 1$.

Next assume $\ell \geq 2$. 
By the spectral sequence (\ref{eq:spectral}), we see 
$E^{0, -\ell }_{2} \cong E^{0, -\ell }_{\infty}$ and $E^{1, -\ell }_{2} \cong E^{1, -\ell }_{\infty}$. 
Then the vanishing $\Hom_{\mb M_{R}}(f, f[-n])=0$ for any $n \in \bb N$ imply $E^{0, -\ell }_{2}=E^{1, -\ell}_{2}=0$. 
By Lemma \ref{lem:higherext}, we have
$\Hom_{\mb M_{R}}(H^{\ell }(f), H^{0}(f)[p])=0 $ for any $p \in \bb Z$. 
Thus 
$E^{p, -\ell}_{2}=0$  holds for any integer $p$. 
By induction on $q$, one can see the following 
\begin{equation}\label{eq:vanishing}
E^{0, q}_{2}=0 \text{ for }q<0 \text{ and }E^{1, q}_{2}=0 \text{ for }q<-1. 
\end{equation}

Then the following hold by (\ref{eq:vanishing}) : 
\begin{equation}\label{eq:vanishing2}
\begin{cases}
\Hom_{\mb M_{R}}(H^{\ell }(f), H^{0}(f) [q])=0	&	\forall q \in \bb Z	\\
\Hom_{\mb M_{R}}(H^{\ell }(f), H^{j}(f) )=0	&	0 < j < \ell	\\
\Hom_{\mb M_{R}}(H^{j}(f), H^{0}(f))=0	&	0 < j < \ell. 	\\
\end{cases}
\end{equation}
Since each cohomology is in the heart $\mca A_{0}$ and both $H^{0}(f)$ and $H^{\ell }(f)$ are non-zero, we see $H^{j}(f)=0$ for $0 < j < \ell$. 
Thus we obtain the distinguished triangle 
\[
\xymatrix{
H^{0}(f)	\ar[r]	&	f	\ar[r]	&	H^{\ell }(f)[-\ell]\ar[r]	&	H^{0}(f)[1]. 
}
\]
The first vanishing in (\ref{eq:vanishing2}) implies that $f$ is the direct sum $H^{0}(f) \+ H^{\ell }(f)[-\ell]$ and this gives a contradiction. 
Hence $\ell$ has to be zero. 
\end{proof}

\begin{thm}\label{thm:main2}
Let $i \colon \Spec \mb k \to \Spec R$ be the closed embedding. 
The following hold. 
\begin{enumerate}
\item $\Dom{i_{*}}=\Stab{\mb M_{R}}$
\item $i_{*}{}^{-1} \colon \Stab{\mb M_{R}} \to \Stab{\mb M_{0}}$ is an isomorphism as complex manifolds. 
\item $\Stab{\mb M_{R}}$ is isomorphic to $\bb C^{2}$. 
\end{enumerate}
\end{thm}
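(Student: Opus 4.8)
The plan is to exploit the fact established in Proposition~\ref{prop:classification}: every $\sigma$-stable object in $\mb M_{R}$ is, up to shift, one of $s(\mb k)$, $j_{!}(\mb k)$, $j_{*}(\mb k)$, all of which lie in the essential image of $i_{*}$. This is the source of condition (Ind) for $i_{*}$. First I would verify that $i_{*}\colon \mb M_{0}\to\mb M_{R}$ satisfies (Ind): if $\Hom_{\mb M_{R}}(i_{*}a,i_{*}b)=0$ for $a,b\in\mb M_{0}$, I want $\Hom_{\mb M_{0}}(a,b)=0$. Since every object of $\mb M_{0}$ is a finite direct sum of shifts of the three indecomposables in $\mca A_{0}$, it suffices to check this on indecomposables, and Lemma~\ref{lem:higherext} together with Lemma~\ref{lem:ext1} handles exactly this: $\Hom_{\mb M_{R}}(i_{*}f,i_{*}g[p])=0$ for $p\in\{0,1\}$ forces vanishing for all $p$, and then faithfulness of $i_{*}$ gives $\Hom_{\mb M_{0}}(f,g[p])=0$ for all $p$. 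Granting (Ind), Lemma~\ref{lm:MMS} provides a continuous map $i_{*}{}^{-1}\colon\Dom{i_{*}}\to\Stab{\mb M_{0}}$.

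For part (1), $\Dom{i_{*}}=\Stab{\mb M_{R}}$, I must show that for every $\sigma'\in\Stab{\mb M_{R}}$ the induced pair $i_{*}{}^{-1}\sigma'=(Z'\circ i_{*},\mca P)$ actually has the Harder--Narasimhan property, hence is a genuine stability condition on $\mb M_{0}$. The key observation is that, by Proposition~\ref{prop:classification}, the HN factors and Jordan--H\"older factors of any object of $\mb M_{R}$ (in particular of any $i_{*}g$ with $g\in\mb M_{0}$) are shifts of $s(\mb k),j_{!}(\mb k),j_{*}(\mb k)$, which all lie in $i_{*}(\mb M_{0})$. So the $\sigma'$-HN filtration of $i_{*}g$ has subquotients in $i_{*}(\mb M_{0})$; pulling this filtration back through the (faithful) functor $i_{*}$ yields an HN filtration of $g$ with respect to $i_{*}{}^{-1}\sigma'$. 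One should take a little care that the filtration subobjects themselves, not just the subquotients, come from $\mb M_{0}$ — but this follows by induction along the filtration using that $i_{*}(\mb M_{0})$ is closed under extensions of its objects by the three stable generators (equivalently, that $i_{*}$ identifies $\mb M_{0}$ with a full triangulated — indeed here it is enough that it is closed under the relevant cones). Thus $i_{*}{}^{-1}$ is defined on all of $\Stab{\mb M_{R}}$.

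For part (2) I would construct the inverse explicitly. Since $\mb M_{0}$ has finitely many stable objects and finite rank Grothendieck group, $\Stab{\mb M_{0}}$ is well understood; one checks that every $\tau\in\Stab{\mb M_{0}}$ is reasonable (Remark~\ref{rmk:reasonable}-style finiteness) and that the three stable objects of $\mb M_{0}$ map to stable objects of $\mb M_{R}$. Conversely, given $\tau=(Z_{0},\mca P_{0})\in\Stab{\mb M_{0}}$, one defines a stability condition on $\mb M_{R}$ whose central charge is determined on $K_{0}(\mb M_{R})$ — and here one uses that $i_{*}$ induces an isomorphism $K_{0}(\mb M_{0})\xrightarrow{\sim}K_{0}(\mb M_{R})$, since by Proposition~\ref{prop:classification} the classes of $s(\mb k),j_{!}(\mb k),j_{*}(\mb k)$ generate (and the ranks match: both are free of rank $3$, or $3n$ after splitting into local factors). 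Concretely, one can use the semiorthogonal decomposition $\mb M_{R}=\<(\mb M_{R})_{/0},(\mb M_{R})_{0/}\>$ from Lemma~\ref{lem:SOD} together with the gluing Proposition~\ref{CP2.2}: transport $\tau$ along the three equivalences $s,j_{!},j_{*}$ in~\eqref{eq:identification} (each $\mb D^{b}(\Spec R)\simeq\mathbf C$ by Proposition~\ref{prop:zerolocal}) and glue; the compatibility conditions \eqref{condition-a}, \eqref{condition-b} are arranged because the relevant $\Hom$-vanishings between the components were computed in the proof of Lemma~\ref{lem:higherext}. This produces a map $\Stab{\mb M_{0}}\to\Stab{\mb M_{R}}$; checking it is a two-sided inverse to $i_{*}{}^{-1}$ is then a matter of unwinding definitions (the hearts and central charges match on stable generators, and a locally finite stability condition on either side is determined by its stable objects and their phases/masses). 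Continuity of the inverse is automatic from its description via gluing, or from openness arguments in \cite{MR2373143}; holomorphy of both directions follows since the maps on central charges are $\bb C$-linear isomorphisms on the (finite-dimensional) space of central charges.

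Finally, part (3): combining (2) with the description of $\Stab{\mb M_{0}}$ — the space of stability conditions on the bounded derived category of $A_{2}$-representations (Remark~\ref{rmk:quiver}), which by Qiu \cite{qiu-thesis} / Dimitrov--Katzarkov \cite{MR3984103} is isomorphic to $\bb C^{2}$ — yields $\Stab{\mb M_{R}}\cong\bb C^{2}$.

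I expect the main obstacle to be part (1): proving that $i_{*}{}^{-1}\sigma'$ genuinely has the Harder--Narasimhan property, i.e. that the HN filtration of $i_{*}g$ in $\mb M_{R}$ can be lifted through $i_{*}$ to a filtration of $g$ in $\mb M_{0}$. This is where non-functoriality of $\Stab$ bites, and it relies essentially on Proposition~\ref{prop:classification} (all stable objects are ``defined over $\mb k$'') rather than on any formal property of $i_{*}$.
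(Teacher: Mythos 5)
Your overall strategy --- induce via $i_{*}$ using the Macr\`i--Mehrotra--Stellari construction, show $\Dom{i_{*}}=\Stab{\mb M_{R}}$, build the inverse by Collins--Polishchuk gluing along the semiorthogonal decompositions of Lemma \ref{lem:SOD}, and conclude with Qiu's computation for the $A_{2}$ quiver --- is exactly the paper's strategy, and your parts (2) and (3) essentially reproduce the published argument (modulo the slip that $K_{0}(\mb M_{R})\cong K_{0}(\mb D^{b}(\Spec R))^{\oplus 2}$ has rank $2$, not $3$: the three indecomposables satisfy $[s(\mb k)]=[j_{!}(\mb k)]+[j_{*}(\mb k)]$).

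The gap is in part (1), precisely where you anticipated trouble. Your lifting argument rests on the claim that $i_{*}(\mb M_{0})$ is closed under the relevant cones, ``equivalently, that $i_{*}$ identifies $\mb M_{0}$ with a full triangulated subcategory.'' This is false whenever $R\neq \mb k$. For $R=\mb k[\epsilon]/(\epsilon^{2})$ one has $\Ext^{1}_{R}(\mb k,\mb k)\neq 0=\Ext^{1}_{\mb k}(\mb k,\mb k)$, so $i_{*}$ is faithful but not full (Lemma \ref{lem:ext1} only gives \emph{injectivity} of $i_{*}^{1}$), and the cone in $\mb M_{R}$ of the corresponding morphism $j_{!}(\mb k)[-1]\to j_{!}(\mb k)$ is $j_{!}(R)$, which is not in the essential image of $i_{*}$. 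Indeed, if the image of $i_{*}$ were extension-closed and contained all stable objects, local finiteness would force it to be all of $\mb M_{R}$. For the same reason your ``key observation'' is overstated: the HN factors of a general object of $\mb M_{R}$ are only \emph{semistable}, hence successive extensions of shifts of the three generators, and need not themselves be shifts of them or lie in $i_{*}(\mb M_{0})$ (again $j_{!}(R)$ is a counterexample when $j_{!}(\mb k)$ is stable). The paper avoids lifting anything from $\mb M_{R}$: since $\rank K_{0}(\mb M_{R})=2$ and stable objects generate $K_{0}$, two of the three indecomposables, say $f_{1},f_{2}$, must be $\sigma$-stable, and the distinguished triangle $f_{2}\to g\to f_{1}\to f_{2}[1]$ relating them --- which already lives in $\mb M_{0}$ --- is then analyzed by comparing phases: $\phi_{1}<\phi_{2}+1$ always (as $\Hom_{\mb M_{R}}(f_{1},f_{2}[1])\neq 0$); if $\phi_{2}>\phi_{1}$ the triangle is the HN filtration of $g$; if $\phi_{2}=\phi_{1}$ then $g$ is semistable; if $\phi_{2}<\phi_{1}$ then $g$ is stable. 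You would need this (or an equivalent case analysis internal to $\mb M_{0}$) to close part (1).
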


\begin{proof}
We first show $\Dom{i_{*}} \supset\Stab{\mb M_{R}}$. 
Take $\sigma \in \Stab{\mb M_{R}} $ arbitrary. 
It is enough to show that $i_{*}{}^{-1} \sigma$ defined by (\ref{eq:inducing}) has the Harder-Narasimhan property for any $f \in \mb M_{0}$. 

Note that an indecomposable object in $\mb M_{0}$ is also, up to shifts, one of the objects in (\ref{eq:indecomposable}) since the global dimension of $\mca A_{0}$ is $1$. 
If $f$ and $g$ in $\mb M_{0}$ has the Harder-Narasimhan filtration with respect to $i_{*}{}^{-1}\sigma$, 
one can construct the Harder-Narasimhan filtration of the direct sum $f \+ g$.  
So it is necessary to show that any indecomposable object $f \in \mb M_{0}$ has the Harder-Narasimhan filtration.

Since $\sigma$ is locally finite, any objet in $\mb M_{R}$ is given by a successive extension of finite $\sigma$-stable objects. 
Thus the classes of $\sigma$-stable objects generate $K_{0}(\mb M_{R}) \cong K_{0}(\mb D^{b}(\Spec R))^{\+2} $. 
Since $\rank K_{0}(\mb M_{R})=2$, two of the objects $f_{1}$ and $f_{2}$ in (\ref{eq:indecomposable}) should be $\sigma$-stable. 
If the other object $g$ in (\ref{eq:indecomposable}) is semistable then any indecomposable object in $\mb M_{0}$ has the trivial Harder-Narasimhan filtration for $i_{*}{}^{-1}\sigma$. 

We have to discuss three cases of $g$. 
Note that there is the following distinguished triangle of the objects in (\ref{eq:indecomposable}): 
\begin{equation}\label{eq:HN}
\xymatrix{
j_{*}(\mb k)	\ar[r]	&	s(\mb k)	\ar[r]		&	j_{!}(\mb k)	\ar[r]	&	j_{*}(\mb k)[1]
}
\end{equation}
There is no loss of generality in assuming that 
\begin{equation}\label{eq:kouho}
(f_{1}, f_{2})=
\begin{cases}
(s(\mb k), j_{!}(\mb k)[-1])	&	\text{if }g=j_{*}(\mb k)	\\
(j_{*}(\mb k)[1], s(\mb k))	&	\text{if }g=j_{!}(\mb k) \\
(j_{!}(\mb k), j_{*}(\mb k))	&	\text{if }g=s(\mb k)	. 
\end{cases}
\end{equation}
Then we have the distinguished triangle from the triangle (\ref{eq:HN}) in each cases:
\begin{equation}\label{eq:bunkai}
\xymatrix{
f_{2}	\ar[r]	&	g	\ar[r]	&	f_{1}	\ar[r]	&	f_{2}[1]. 
}
\end{equation}
Moreover the triplet $(f_{1}, f_{2}, g)$ corresponds to three semiorthogonal decomposition of $\mb M_{R}$ in Lemma \ref{lem:SOD}

Let $\phi _{i}$ be the phase of the stable object $f_{i}$. 
Since $\Hom_{\mb M_{R}}(f_{1}, f_{2}[1])$ is non-zero, the stability of $f_{1}$ and $f_{2}$ implies $\phi_{1} < \phi_{2}+1$. 
If $\phi _{2}> \phi_{1}$, then the filtration (\ref{eq:bunkai}) gives the Harder-Narasimhan filtration of $g$. 
If $\phi_{2}=\phi_{1}$, then $g=s(\mb k)$ is semistable by (\ref{eq:bunkai}). 
If $\phi_{2} < \phi_{1}$, then the inequality $\phi_{2} < \phi_{1} < \phi _{2}+1$ holds. 
Without loss of generality we can assume that the heart $\mca P(0,1]$ of $\sigma$ is the extension closure generated by $f_{1}$ and $f_{2}$ by $\widetilde{\mr{GL}}_{2}^{+}(\bb R)$-action.  
Then the non-trivial subobject of $g$ is only $f_{2}$. 
Hence $g$ is stable by $\phi_{2} < \phi_{1}$. 
Thus, if $g = s(\mb k)$, then $\sigma$ is in $\Dom{i_{*}}$ and we have $\Dom{i_{*}}=\Stab{\mb M_{R}}$.

Since $\Dom{i_{*}}=\Stab{\mb M_{R}}$, the map $i_{*}{}^{-1}\colon \Stab{\mb M_{R}} \to \Stab{\mb M_{0}} $ is not only continuous but also holomorphic. 
Thus it is enough to show that $i_{*}{}^{-1}$ is bijective since the spaces are complex manifolds. 

For the subjectivity, let $\sigma_{0}=(Z_{0}, \mca P_{0})$ be in $\Stab{\mb M_{0}}$. 
Then two of the objects listed in (\ref{eq:HN}) has to be $\sigma_{0}$-stable. 
Hence there are three possibilities of two objects $(f_{1}, f_{2})$. 
Without loss of generality we can assume that the pair $(f_{1}, f_{2})$ is (\ref{eq:kouho}). 
Note that these pairs generate semiorthogonal decompositions of not only of $\mb M_{0}$ but also of $\mb M_{R}$ listed in (\ref{sd_{0}}), (\ref{d_{1}s}) and (\ref{nokori}) respectively. 

Then, in any cases, the pair satisfies 
\[
\begin{cases}
\Hom_{\mb M_{R}}(f_{2}, f_{1}[p])=0 & (\forall p)	\\
\Hom_{\mb M_{R}}(f_{1}, f_{2}[p])=0 & (p \leq 0)	\\
\Hom_{\mb M_{R}}(f_{1}, f_{2}[1])\neq 0. 
\end{cases}
\]

Let $n$ be the minimal integer which is greater than or equal to $\phi_{2}-\phi_{1}$. 
Using the identification (\ref{eq:identification}), define stability conditions $\sigma_{i}=(Z_{i}, \mca P_{i})$ on $\mb D^{b}(\Spec R) $ by 
\begin{align*}
\mca P_{1}(0,1]&=\mr{coh}(\Spec R), Z_{1}(\mb k):=Z_{0}(f_{1})	, \text{ and}\\
\mca P_{2}(0,1] &= \mr{coh}(\Spec R)[-n], Z_{2}(\mb k):=Z_{0}(f_{2}). 
\end{align*}
Note that both $\sigma_{1}$ and $\sigma_{2}$ are reasonable by Remark \ref{rmk:reasonable}. 
Since the set of phases of semistable objects for $\sigma_{i}$ is discrete, 
the second condition in Lemma \ref{CP2.2} is automatic. 
Then the gluing stability condition $\sigma:=\gl{\sigma_{1}}{\sigma_{2}}$ with respect to the corresponding  semiorthogonal decomposition on $\mb M_{R}$ satisfies $i_{*}{}^{-1}\sigma=\sigma_{0}$. 
Hence $i_{*}{}^{-1}$ is surjective. 

For the infectivity, let $\tau_{1}$ and $\tau_{2} $ be stability conditions on $\mb M_{R}$. 
If $i_{*}{}^{-1}(\tau_{1})=i_{*}{}^{-1}(\tau_{2}) =:\tau_{0}$, then there exist two indecomposable objects $f_{1}$ and $f_{2}$ in $\mb M_{0}$ which are stable in $\tau_{0}$ and whose phases are in the interval $(0,1]$. 
Since the heart of $\tau_{i}$ is the extension closure of $f_{1}$ and $f_{2}$, we see $\tau_{1}=\tau_{2}$.

Finally the third assertion follows from \cite{qiu-thesis}. 
In fact, $\Stab{\mb M_{0}}$ is isomorphic to $\bb C^{2}$ by \cite{qiu-thesis}. 
Thus the isomorphism $ i_{*}{}^{-1} \colon   \Stab{\mb M_{R}}  \overset{\sim}{\to}\Stab{\mb M_{0}}$ implies the assertion. 
\end{proof}

\begin{rmk}
The argument above gives an alternative proof of Proposition \ref{prop:zerolocal} as follows. 

Since the global dimension of $\mb k$ is zero, the functor $i_{*}\colon \mb D^{b}(\Spec \mb k) \to \mb D^{b}(\Spec R) $ is faithful. 
By the same argument in the proof of Theorem \ref{thm:main2}, we see $\Stab{\mb D^{b}(\Spec R)} = \Dom{i_{*}}$. 
Thus we obtain a holomorphic map 
\[
i_{*}{}^{-1} \colon \Stab{\mb D^{b}(\Spec R)} \to \Stab{\mb D^{b}(\Spec \mb k)}. 
\]
Then one can easily see that $i_{*}{}^{-1}$ is surjective and injective. 
Thus $i_{*}{}^{-1}$ gives an isomorphism between $\Stab {\mb D^{b}(\Spec R)}$ and $\Stab{\mb D^{b}(\Spec \mb k)}$. 
\end{rmk}

\begin{cor}\label{cor:tukareta}
Let $R$ be a zero-dimensional Noetherian ring. 
The spaces $\Stab{\mb M_{R}}$ is isomorphic to $\bb C^{2n}$ where $n$ is the number of points of $\Spec R$. 
\end{cor}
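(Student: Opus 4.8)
The plan is to reduce to the local case already settled in Theorem \ref{thm:main2}. A zero-dimensional Noetherian ring $R$ is a finite product $R = \prod_{i=1}^{n} R_{i}$ of zero-dimensional Noetherian local rings $R_{i}$, where $n$ is the number of points of $\Spec R$; correspondingly $\Spec R$ is the disjoint union of the $\Spec R_{i}$. First I would record that this product decomposition passes to the category of morphisms. Since $\mb M_{R}$ is equivalent to $\mb D^{b}\left((\mr{coh}\,\Spec R)^{\Delta^{1}}\right)$ by Proposition \ref{prop:quiver}, and $(\mr{coh}\,\Spec R)^{\Delta^{1}} \cong \prod_{i=1}^{n}(\mr{coh}\,\Spec R_{i})^{\Delta^{1}}$ (because $\mr{coh}\,\Spec R = \prod_{i}\mr{coh}\,\Spec R_{i}$ and forming the arrow category commutes with products), one obtains an orthogonal decomposition
\[
\mb M_{R} = \bigoplus_{i=1}^{n} \mb M_{R_{i}}.
\]
Equivalently, this follows directly from $\mb M_{R} = \mr h(\ms D^{b}_{\mr{coh}}(\Spec R)^{\Delta^{1}})$ together with $\ms D^{b}_{\mr{coh}}(\Spec R) \simeq \prod_{i} \ms D^{b}_{\mr{coh}}(\Spec R_{i})$ and the fact that forming $\Delta^{1}$-functor categories commutes with products.

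Next I would invoke the product formula for spaces of stability conditions under an orthogonal decomposition, namely \cite[Proposition 5.2]{MR3984103} — the same tool already used in the proof of Theorem \ref{thm:artin} — to conclude
\[
\Stab{\mb M_{R}} \cong \prod_{i=1}^{n} \Stab{\mb M_{R_{i}}}
\]
as complex manifolds. Finally, Theorem \ref{thm:main2}(3) gives $\Stab{\mb M_{R_{i}}} \cong \bb C^{2}$ for each $i$, whence $\Stab{\mb M_{R}} \cong \bb C^{2n}$, which is the assertion.

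The only point requiring a little care — and the one I expect to be the main obstacle — is the first step: verifying that $\mb M_{R}$ genuinely decomposes as the orthogonal sum $\bigoplus_{i} \mb M_{R_{i}}$, i.e. that passing to the category of morphisms is compatible with the product decomposition of $\ms D^{b}_{\mr{coh}}(\Spec R)$ induced by $R = \prod_{i} R_{i}$. Everything after that is a direct citation.
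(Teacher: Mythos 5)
Your argument is exactly the paper's: decompose $R$ as a finite product of zero-dimensional local rings, obtain the orthogonal decomposition $\mb M_{R}=\bigoplus_{i}\mb M_{R_{i}}$, apply \cite[Proposition 5.2]{MR3984103} to get $\Stab{\mb M_{R}}\cong\prod_{i}\Stab{\mb M_{R_{i}}}$, and conclude with Theorem \ref{thm:main2}(3). The only difference is that you justify the decomposition of $\mb M_{R}$ in more detail (via Proposition \ref{prop:quiver} and compatibility of arrow categories with products), whereas the paper simply asserts it; this is a harmless and correct elaboration.
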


\begin{proof}
By the assumption, $R$ is the direct product $\prod_{i=1}^{n} R_{i}$ of zero-dimensional Noetherian local ring $\{R_{i}\}_{i=1}^{n}$. 
Then $\mb M_{R}$ has the orthogonal decomposition $\mb M_{R} = \bigoplus \mb M_{R_{i}}$ and $\Stab{\mb M_{R}}$ is isomorphic to the product 
$\prod _{i=1}^{n} \Stab{\mb M_{R_{i}}}$ by \cite[Proposition 5.2]{MR3984103}. 
Since each $\Stab{\mb M_{R_{i}}}$ is isomorphic to $\bb C^{2}$, we have the desired assertion. 
\end{proof}

\begin{cor}\label{cor:equivalent}
Let $R$ be an Noetherian ring. 
The spaces $\Stab{\mb M_{R}}$ and $\Stab{\mb D^{b}(\Spec R)}$ are homotopy equivalent. 
\end{cor}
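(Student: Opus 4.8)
The plan is to observe that all the work has already been done in the preceding results, and then to split into the two cases $\dim R = 0$ and $\dim R \neq 0$, carrying out each one separately.

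First I would dispose of the case $\dim R \neq 0$ (which, for a general Noetherian ring, includes the possibility $\dim R = \infty$). By Corollary \ref{cor:charcterization} the space $\Stab{\mb D^{b}(\Spec R)}$ is empty, and by Proposition \ref{prop:morphisms} the space $\Stab{\mb M_{R}}$ is empty as well. Two empty topological spaces are vacuously homotopy equivalent, so there is nothing further to check; this is exactly the content of the remark following Corollary \ref{cor:matome}.

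Next I would treat the case $\dim R = 0$. Then $R$ is Artinian, hence a finite product $\prod_{i=1}^{n} R_{i}$ of zero-dimensional Noetherian local rings, so the number $n$ of points of $\Spec R$ is finite. By Theorem \ref{thm:artin}, $\Stab{\mb D^{b}(\Spec R)}$ is isomorphic as a complex manifold, hence as a topological space, to $\bb C^{n}$; by Corollary \ref{cor:tukareta}, $\Stab{\mb M_{R}}$ is isomorphic to $\bb C^{2n}$. Both $\bb C^{n}$ and $\bb C^{2n}$ are convex, hence contractible, hence homotopy equivalent to a point, and therefore homotopy equivalent to each other. This completes the proof.

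I do not anticipate a genuine obstacle at this stage: the statement is a formal consequence of results already established, the essential ingredient being the isomorphism $i_{*}{}^{-1}\colon \Stab{\mb M_{R}} \to \Stab{\mb M_{0}}$ of Theorem \ref{thm:main2}, which together with Qiu's computation and Theorem \ref{thm:artin} pins down the homeomorphism types as complex affine spaces. If one preferred a formulation insensitive to the precise exponents, it would suffice to record that when $\dim R = 0$ both spaces are non-empty (Corollary \ref{cor:matome}) and contractible, and otherwise both are empty.
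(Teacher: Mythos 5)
Your proposal is correct and follows the paper's own argument essentially verbatim: both cases ($\dim R\neq 0$ giving two empty spaces, and $\dim R=0$ giving $\bb C^{n}$ and $\bb C^{2n}$ via Theorem \ref{thm:artin} and Corollary \ref{cor:tukareta}, both contractible) are exactly what the paper does. No issues.
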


\begin{proof}
Suppose $\dim R >0$. 
By Corollary \ref{cor:matome}, both are empty sets. 

Suppose $\dim R=0$.  
Let $n$ be the number of points in $\Spec R$. 
Then $\Stab{\mb D^{b}(\Spec R)}$ is isomorphic to $\bb C^{n}$ by Theorem \ref{thm:artin}. 
In particular $\Stab{\mb D^{b}(\Spec R)}$ is contractible. 
By Corollary \ref{cor:tukareta}, $\Stab{\mb M_{R}}$ is isomorphic to $\bb C^{2n}$. 
Hence $\Stab{\mb M_{R}}$ is also contractible and we have finished the proof. 
\end{proof}

%
%

\end{document}